\pdfoutput=1
\documentclass[11pt]{amsart}

\usepackage[utf8]{inputenc}
\usepackage[T1]{fontenc}
\usepackage{lmodern} 
\usepackage[english]{babel} 

\usepackage{amsmath,amsfonts,amsthm,amssymb,mathtools,mathdots,mathrsfs,stmaryrd,bbold}

\usepackage{graphicx,xcolor}
\usepackage{tikz}
\usepackage{tikz-cd}
\usepackage{pgfplots}
\pgfplotsset{compat=1.15}

\usetikzlibrary{cd,calc,decorations.pathmorphing,shapes.multipart}

\usepackage[margin=3cm]{geometry}
\usepackage{marginnote,url,caption,subcaption,fancyhdr,fancybox,enumitem,csquotes,setspace,xstring,ifthen,import}

\newtheoremstyle{mytheoremstyle}{6pt}{6pt}{\slshape}{}{\bfseries}{.}{1em}{}
\theoremstyle{mytheoremstyle}
\newtheorem{thm}{Theorem}[section]
\newtheorem{lem}[thm]{Lemma}

\newtheorem{prop}[thm]{Proposition}
\newtheorem{ddef}[thm]{Definition}
\newtheorem{rmq}[thm]{Remark}

\DeclareMathOperator{\minimum}{min}
\DeclareMathOperator{\degree}{deg}
\DeclareMathOperator{\gonality}{gon}
\DeclareMathOperator{\covgon}{cov.gon}
\DeclareMathOperator{\order}{ord}
\DeclareMathOperator{\dimension}{dim}
\DeclareMathOperator{\irr}{irr}
\DeclareMathOperator{\supp}{supp}

\makeatletter
  \renewcommand\@seccntformat[1]{\csname the#1\endcsname.\quad}
\makeatother

\begin{document}

\title{Covering gonality of hypersurfaces in a product of projective spaces}
\author{Raphaël Hiault}

\date{}
\maketitle
\begin{abstract}
In this work, we investigate the behaviour of the covering gonality of a very general hypersurface in a product of projective spaces. Inspired by the work of Bastianelli, Ciliberto, Flamini and Suppino in  \cite{bastianelli2019gonality} which addresses the case of a hypersurface in a projective space, we establish a similar result for very general smooth hypersurfaces of sufficiently large bi-degree. More precisely, we show that the covering gonality of such hypersurfaces can be computed by viewing them as a family of hypersurfaces over a projective space. Then, the curves computing the covering gonality lie entirely within the fibres of one of the families. This rules out transversal curves from computing the covering gonality. In addition to this, we investigate the behaviour of the joint covering gonality as in \cite{lazarsfeld2023measures} and establish a lower bound for bi-degree large enough. 
\end{abstract}
\begin{center}

\section*{Introduction}

\end{center}

\mdseries
A central problem in algebraic geometry is to determine the extent to which a given algebraic variety $X$ admits rationality properties—such as rationality, unirationality, or rational connectedness. Within the past few years, natural invariants which measure these rationality properties have been extensively studied. The foundational case is the one-dimensional setting. Consider $C$, an irreducible smooth curve. The gonality of $C$ is defined as the minimal degree of a morphism  $\phi : C \rightarrow \mathbb{P}^{1}$ and we denote it $\gonality(C)$. In the higher dimensional setting, various extensions of this definition exist, depending on the rationality property of interest. Consider a smooth projective variety $X$ of dimension $n$. On the one hand, the degree of irrationality of $X$ is defined as the minimal degree of a rational dominant map $f : X \dashrightarrow \mathbb{P}^{n}$. We denote it $\irr(X)$.  Then, $X$ is rational if and only if $\irr(X)=1$. This definition coincides with the gonality in the one-dimensional case. On the other hand, the covering gonality of $X$ is defined as the minimal gonality of a curve $C$ passing through a general point $x \in X $. We denote it $\covgon(X)$. One can notice that $X$ is uniruled if and only if $\covgon(X)=1$. \\
Historically, the first result is due to Max Noether who showed that the gonality of a smooth plane curve $C$ of degree $d\geq 3$ is computed by the projection from a point $x\in C$. This result was subsequently generalized in \cite{degreehypersurfaces} for a very general irreducible smooth hypersurface $X\subset\mathbb{P}^{n}$ of degree $d\geq 2n+1$. The main result of \cite{degreehypersurfaces} is that $\irr(X)$ is computed by the projection from a point $x\in X$.  More generally, these invariants are actively studied in different settings, see e.g.,  \cite{chen2020fano}, \cite{stapleton2020degree}, \cite{chen2023rational}, \cite{chen2021multiplicative}, \cite{colombo2022degree}, \cite{bastianelli2022covering}, \cite{moretti2024degree}, \cite{voisin2022fibrations} and for a global survey the reader can consult \cite{chen2025primermeasuresirrationality}. In this note, we are interested in the covering gonality. In recent years, several results have been established in various contexts. For instance, in \cite{martin2020conjecture},  Martin exhibits a lower bound for the covering gonality of a very general abelian variety in any dimension answering a conjecture of Voisin in \cite{voisin2018chow}. Also, in \cite{chen2021multiplicative}, the author gives a multiplicative lower bound for the covering gonality of certain complete intersections. Finally, in \cite{bastianelli2019gonality} the authors compute the covering gonality of a very general irreducible smooth hypersurface $X\subset \mathbb{P}^{n+1}$ of degree $d\geq 2n+2$. This work actually originated in \cite{lopez1995curves}, \cite{degreehypersurfaces} in which the authors investigated the lower dimensional cases.

Inspired by the result \cite[Theorem 1.1]{bastianelli2019gonality}, we prove the following theorem:

\vspace{0.35cm}

\begin{thm}
\label{mainthm}
Let $X \subset \mathbb{P}^{n+1}\times \mathbb{P}^{m+1}$ be a very general irreducible smooth hypersurface of bi-degree $(a,b)$. Suppose that $a\geq 2n+m+1$, $b\geq 2m+n+1$, and $n,m \geq 2 $. Then:
\begin{center}
    ${ \minimum \Big\{a-\lfloor \frac{\sqrt{16n+9}-1}{2} \rfloor,b-\lfloor \frac{\sqrt{16m+9}-1}{2} \rfloor \Big\} \leq \covgon(X)\leq \minimum \Big\{a-\lfloor \frac{\sqrt{16n+1}-1}{2} \rfloor,b-\lfloor \frac{\sqrt{16m+1}-1}{2} \rfloor \Big\} }$.
\end{center}
Moreover, both sides of the inequality coincide whenever $m,n \in \{4\alpha^{2}+3\alpha, 4\alpha^{2}+5\alpha+1, \alpha \in \mathbb{N}\}$
\end{thm}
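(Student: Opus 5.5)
For the upper bound I will use the two projections. Let $p_2 : X \to \mathbb{P}^{m+1}$. The fibre over a general point $y$ is a hypersurface $X_y \subset \mathbb{P}^{n+1}$ of degree $a$, and for $X$ very general it is a very general hypersurface of degree $a$. By \cite[Theorem 1.1]{bastianelli2019gonality}, $X_y$ is covered by curves of gonality $a-\lfloor \frac{\sqrt{16n+1}-1}{2}\rfloor$. These come from the cone construction: take a point $x \in X_y$, a linear space through $x$ meeting $X_y$ with high multiplicity, and project to get a plane curve whose gonality is controlled. Since every fibre contains such curves through its general point, the family of all of them sweeps out $X$. This gives $\covgon(X) \le a-\lfloor\cdot\rfloor$, and the projection $p_1$ gives the symmetric bound in $b$ and $m$. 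So the only real content of the upper bound is checking that the construction of \cite{bastianelli2019gonality} applies fibrewise and varies in families. This is a routine relative version of their construction over the open set of $\mathbb{P}^{m+1}$ where the fibre is smooth and general.

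For the lower bound I will follow the strategy of \cite{bastianelli2019gonality}. Let $\{C_t\}_{t\in T}$ be a covering family of curves of gonality $c = \covgon(X)$, with maps $f_t : C_t \to \mathbb{P}^1$. A general fibre of $f_t$ gives $c$ points of $X$ satisfying the Cayley--Bacharach condition with respect to the canonical system $|K_X|$. By adjunction, $K_X = \mathcal{O}(a-n-2,\, b-m-2)|_X$. The hypotheses $a\ge 2n+m+1$ and $b\ge 2m+n+1$ should be exactly what is needed to make this bundle positive enough that the fibres are forced to be special. I would prove a bi-graded analogue of the Cayley--Bacharach lemma: if $c$ is small compared with $\min(a-n-2,\, b-m-2)$ and the points satisfy Cayley--Bacharach for $\mathcal{O}(a-n-2, b-m-2)$, then the points lie on a line contained in a fibre of $p_1$ or of $p_2$. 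The approach is to separate points using products of linear forms in each factor. If two points have distinct images under both projections, bi-homogeneous forms of low bidegree separate them, so the points must collapse in one factor.

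Next I rule out transversal curves. If the fibres of $f_t$ lie on lines in, say, $p_2$-fibres, then the curve $C_t$ has to lie in a single fibre $X_y$. A general $C_t$ is not contained in one fibre only if its image in $\mathbb{P}^{m+1}$ is a curve. In that case, with the points of a fibre of $f_t$ all mapping to one point $y$, the map $p_2|_{C_t}$ factors through $f_t$ up to birational modification. I would exploit very generality, via a Voisin-style argument on the universal family together with the positivity of $K_X$ in the $\mathbb{P}^{m+1}$ direction, to show that a covering family of such curves would force $\covgon(X)$ to be at least about $b-m-2$ times a positive factor, which exceeds the upper bound. This reduces to curves inside fibres $X_y$, which are very general hypersurfaces in $\mathbb{P}^{n+1}$, or symmetrically in $\mathbb{P}^{m+1}$. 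Then \cite[Theorem 1.1]{bastianelli2019gonality} gives the lower bound $a-\lfloor\frac{\sqrt{16n+9}-1}{2}\rfloor$ or its $b$-analogue. Finally, when $n$ is of the form $4\alpha^2+3\alpha$ or $4\alpha^2+5\alpha+1$, the two floors agree, because $16n+1$ or $16n+9$ is then a perfect square sitting just at the threshold. This is a direct check that gives the equality statement.

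The main obstacle is the bi-graded Cayley--Bacharach step together with the exclusion of transversal curves. The fibre-wise restriction must hold for a covering family, not just pointwise. I expect to need a careful analysis of how the $c$ points distribute between the two projections, using the numerical hypotheses on $a$ and $b$ to handle mixed configurations.
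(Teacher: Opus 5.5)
Your upper bound and your bi-graded Cayley--Bacharach step agree with the paper. The paper gets the latter from the Segre embedding (Propositions~\ref{FibreCorres} and~\ref{FanoOfLine}); note that you need the upper bound to guarantee $c\le 2k+1$ there.

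The genuine gap is the exclusion of transversal curves, which you only assert. The mechanism you suggest cannot work. All points of a gonal fibre lie in one $p_2$-fibre, so the $\mathbb{P}^{m+1}$-part of $K_X$ never separates them. Cayley--Bacharach therefore gives only $c\ge a-n$ (Proposition~\ref{Estimate}), and Lemma~\ref{StapUll} with $E=X$ gives only $c\ge\min\{a-n,b-m\}$. Both bounds lie below the upper bound and cannot tell transversal curves from curves in fibres. So no bound of the form ``$(b-m-2)$ times a positive factor'' comes out.

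The missing idea is the residual-point argument of Proposition~\ref{LocalisationOfCurves}:
\begin{enumerate}
\item Since $c\ge a-n$, each line $\ell_{(t,y)}=\ell'\times\{z\}$ carrying a gonal fibre meets $X$ in at most $n$ further points.
\item If this residual $\ell_{(t,y)}\cdot X-\sum_i f(q_i)$ moved with $y$, it would trace curves $D_t$ of gonality $\le n$ sweeping a proper subvariety $S$. The bound $\covgon(S)\ge\dim S$ from Lemma~\ref{StapUll} is then played against an upper bound on $\covgon(S)$ coming from the focal (ramification) divisor of the incidence correspondence $\mathcal{I}\to\widetilde{\mathcal{B}}$ of the congruence of lines.
\item Hence the residual is a fixed point $x_t$, and every $\ell_{(t,y)}$ passes through $x_t$. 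Being of the form $\ell'\times\{z\}$, each such line lies in $p_2^{-1}(p_2(x_t))$, so $f(\mathcal{C}_t)$ lies in a single fibre.
\end{enumerate}
For a transversal curve, the lines over distinct $y$ sit in distinct, hence disjoint, fibres, so the residual is forced to move. That is exactly what rules such curves out, and your proposal has no substitute for it.

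Your ``direct check'' of the equality clause is also false. For $n=4\alpha^{2}+3\alpha$ one has $16n+9=(8\alpha+3)^2$, so $\lfloor\frac{\sqrt{16n+9}-1}{2}\rfloor=4\alpha+1$ while $\lfloor\frac{\sqrt{16n+1}-1}{2}\rfloor=4\alpha$. For $n=4\alpha^{2}+5\alpha+1$ one has $16n+9=(8\alpha+5)^2$, and the floors are $4\alpha+2$ and $4\alpha+1$. In fact the two floors differ exactly when $16n+9$ is a perfect square, i.e.\ precisely for these $n$; these are the possible exceptions of \cite{bastianelli2019gonality}. The bounds coincide when $m,n$ are \emph{not} of this form. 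So the clause, as printed in the statement itself, should read $m,n\notin\{4\alpha^{2}+3\alpha,\,4\alpha^{2}+5\alpha+1\}$, and your justification of it does not hold.
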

\vspace{0.35cm}
First, this computes the covering gonality of such hypersurfaces up to one, in the same spirit as the result of \cite{bastianelli2019gonality}. This result admits a geometric interpretation, which lies at the core of this work. Consider $X \subset \mathbb{P}^{n+1} \times \mathbb{P}^{m+1}$ a very general smooth irreducible of bi-degree $(a,b)$ with $a \geq 2n+m+1, b \geq 2m+n+1$. Via the natural projection onto each factor, $X$ may alternatively be seen as a family of degree $a$ hypersurfaces in $\mathbb{P}^{n+1}$ parametrized by $\mathbb{P}^{m+1}$, or as a family of hypersurfaces of degree $b$ in $\mathbb{P}^{m+1}$ parametrized by $\mathbb{P}^{n+1}$. We prove that the covering gonality of $X$ is actually the covering gonality of a general fibre from one of these families. \\
In the final section of this paper we study the newly introduced birational invariant measure of association. This notion has been introduced by Lazarsfeld-Martin in \cite{lazarsfeld2023measures} and the intuition is as follow: given a pair of two algebraic variety of the same dimension $n$, one can be interested by the following question: are $X$ and $Y$ birationally related? If not how can one measure that failure? In \cite{lazarsfeld2023measures} Lazarfeld and Martin introduce the joint covering gonality of two algebraic varieties of same dimension $X$ and $Y$, that we denote $\covgon(X,Y)$. Although less understood than in the classical setting, this invariant already satisfies interesting lower bounds. Our second main result establishes such a bound for hypersurfaces in products of projective spaces inspired by the one proven in \cite{lazarsfeld2023measures}.

\begin{thm}
Let $X, Y \subset \mathbb{P}^{n+1} \times \mathbb{P}^{m+1}$ be very general smooth hypersurfaces 
of bi-degrees $(a_1,b_1)$ and $(a_2,b_2)$ respectively with $a_{1},a_{2} \geq 2n+m+2$ and $b_{1},b_{2} \geq 2m+n+2$. Then:
\vspace{0.5cm}
\begin{center}
   $\operatorname{cov.gon}(X,Y) \;\geq\; k_1 + k_2 - n - m$
\end{center}
where : 
\[
k_1 = \min\{a_1 - n - 3, b_1 - m - 3\}, \quad
k_2 = \min\{a_2 - n - 3, b_2 - m - 3\}.
\]

\end{thm}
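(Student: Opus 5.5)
We adapt the Lazarsfeld--Martin argument for hypersurfaces in projective space to $\mathbb{P}^{n+1}\times\mathbb{P}^{m+1}$. The strategy is to show that every point of $X$ and of $Y$ imposes independent conditions on a large linear system of canonical forms, and then to feed this into the Cayley--Bacharach criterion for joint covering gonality.

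Recall the definition. $\covgon(X,Y)$ is the minimal gonality of a curve $C$ admitting, through a general point, generically finite maps $C\to X$ and $C\to Y$ whose images pass through general points. Suppose we have a covering family of such curves $C_t$ with gonality $c$. Write $f_t\colon C_t\to\mathbb{P}^1$ for a gonal map, and let $\phi_t\colon C_t\to X$ and $\psi_t\colon C_t\to Y$ be the two maps. A general fibre $F=f_t^{-1}(y)$ is a reduced set of $c$ points of $C_t$. Following \cite{lazarsfeld2023measures}, its images satisfy Cayley--Bacharach conditions with respect to the canonical systems. Concretely, pulling back a pair of $n+m$-forms to $C_t\times\mathbb{P}^1$ and taking the trace shows the following: whenever $\omega\in H^0(K_X)$ and $\eta\in H^0(K_Y)$ are such that $\phi_t^*\omega$ and $\psi_t^*\eta$ agree on $F$ up to the relevant sign/trace relation, they must vanish at the remaining point as well. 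The statement we need is the joint version of the Cayley--Bacharach condition: the trace of $(\phi^*\omega-\psi^*\eta)$ along the fibres vanishes. This gives the following consequence. Take the points $\phi(F)\subset X$ and $\psi(F)\subset Y$. If the evaluation maps $H^0(K_X)\to \oplus_{p\in \phi(F)} K_{X,p}$ and $H^0(K_Y)\to\oplus_{q\in\psi(F)}K_{Y,q}$ have large rank, then a dimension count forces $c$ to be large.

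The key input is therefore a separation statement. By adjunction, $K_X=\mathcal{O}(a_1-n-2,\,b_1-m-2)|_X$. Since $X$ is very general and of large bidegree, $H^0(K_X)$ is the restriction of bihomogeneous forms of that bidegree. Forms of bidegree $(\alpha,\beta)$ on $\mathbb{P}^{n+1}\times\mathbb{P}^{m+1}$ separate any $k+1$ points whenever $k\le\min\{\alpha,\beta\}$: take products of linear forms in each factor that vanish on all points but one, since two distinct points differ in at least one factor. Hence $K_X$ separates $k_1+1=\min\{a_1-n-3,b_1-m-3\}+1$ points, and likewise $K_Y$ separates $k_2+1$ points. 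This is exactly where the constants $k_1$ and $k_2$ come from.

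Then we run the counting argument from \cite[Theorem]{lazarsfeld2023measures}. The Cayley--Bacharach-type condition says that the $c$ points of $\phi(F)$, together with those of $\psi(F)$, fail to impose independent conditions in a coupled way. Combining this with the separation properties above should yield $c\ge k_1+k_2-(n+m)$. The subtraction of $n+m=\dim X$ comes from the correction term in the Lazarsfeld--Martin bound. That term arises because the trace relation only involves forms modulo those vanishing identically along the curve family, which costs one dimension for each direction in which the family moves.

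The main obstacle is establishing the joint Cayley--Bacharach condition with the correct shift by $n+m$. This requires the very generality hypothesis, via a Mumford-type argument on the correspondence $C_t\to X\times Y$: very generality ensures that no nonzero correspondence $X\vdash Y$ kills canonical forms on the relevant subspaces. I would first try to quote the general statement of \cite{lazarsfeld2023measures}, which is phrased in terms of $K_X$ and $K_Y$ being $p$-very ample. If that applies, the only work needed here is the separation lemma above. The bounds $a_i\ge 2n+m+2$ and $b_i\ge 2m+n+2$ guarantee $k_i\ge n+m-1$, which makes the resulting inequality meaningful and nonnegative.
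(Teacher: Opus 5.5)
Your proposal has a genuine gap exactly where the additive bound is supposed to come from. You make two inputs precise. The first is that $K_X=\mathcal{O}(a_1-n-2,b_1-m-2)|_X$ and $K_Y$ separate about $k_1$ and $k_2$ points. The second is a Cayley--Bacharach/trace condition built from the pullbacks $\phi^*\omega$, $\psi^*\eta$ of canonical forms. Neither input depends on whether $Y$ is independent of $X$: both hold verbatim for $Y=X$ and the diagonal correspondence $Z=\Delta$. But there $\covgon(X,X)\le\covgon(X)\le\min\{a_1,b_1\}-1$: take plane sections of the fibres $X_z$ and project from a point. This is smaller than $k_1+k_2-n-m=2k_1-n-m$ once the degrees are large.

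So no counting argument using only these inputs can give the theorem. For the same reason, no statement deducing such a bound from $p$-very ampleness of $K_X$ and $K_Y$ alone can hold, so there is nothing of that form to quote from \cite{lazarsfeld2023measures}. There is also no genuinely ``joint'' trace relation: $\operatorname{Tr}(\phi^*\omega)$ and $\operatorname{Tr}(\psi^*\eta)$ vanish separately, so each condition involves only one of $|K_X|$, $|K_Y|$. A sum $k_1+k_2$ can only come from sections of a \emph{product} system $L_1\boxtimes L_2$ inside $K_Z$, and pullbacks of canonical forms do not provide these. Your ``Mumford-type argument'' is where all the content would have to sit, and it is left unspecified.

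The missing ingredient, which is what the paper uses, is the Ein--Voisin variational argument in the form of \cite[Prop.~3.1]{lazarsfeld2023measures}. One spreads $Z$ out over the universal families $\mathcal{X}\to N$, $\mathcal{Y}\to M$, with $p=\dim N$, $q=\dim M$; this is where very generality of the \emph{pair} enters. For $i+j=n+m+1$ this gives a generically surjective map $p_X^*\Omega^{p+i}_{\mathcal{X}}|_X\otimes p_Y^*\Omega^{q+j}_{\mathcal{Y}}|_Y\to K_Z$. Next write $\Omega^{p+i}_{\mathcal{X}}|_X\cong\bigwedge^{n+m+1-i}\bigl(T_{\mathcal{X}}|_X(1,1)\bigr)\otimes\mathcal{O}_X(a_1-2n-m-3+i,\,b_1-2m-n-3+i)$, and similarly for $Y$. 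Global generation of $T_{\mathcal{X}}|_X(1,1)$ and $T_{\mathcal{Y}}|_Y(1,1)$ then gives $K_Z\succeq(k_1-n-m+i)H_X+(k_2-n-m+j)H_Y$.

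This is also the true origin of the shift by $n+m$: one twist by $\mathcal{O}(1,1)$ for each exterior factor, not one ``dimension per direction in which the family moves''. Note too that $\dim X=n+m+1$, not $n+m$. Only then does the curve argument run. For a general curve $C$ of a covering family of $Z$, $K_C$ dominates the pullback of $L_1\boxtimes L_2$, and $C$ is birational onto its image in $D_1\times D_2$. \cite[Lemma~3.6]{lazarsfeld2023measures} then shows that this system separates the points of a gonal fibre of small degree. That contradicts the fact that such a fibre imposes only $c-1$ conditions on $|K_C|$.
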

 
Concerning the organisation of this paper it will be as follows. In the first section we recall all the needed background on the covering gonality and the classical way one has to control this invariant. The second section, much shorter is devoted to define the notion of line in a product of projective spaces which we use heavily during the proof of the first theorem in Section 3. Finally, in section 4 we explore the joint covering gonality and prove the second main theorem stated above.
\\

 In this work, $X$ will denote a smooth irreducible  complex projective variety of  dimension $\dim(X)$. We will say a property holds at a general point $x \in X $ if it holds on a Zariski open subset of $X$. We will say that a property holds for a very general point in $X$ if it holds outside of countably many proper Zariski closed subsets in $X$. 

\vspace{0.3cm}

\textit{Acknowledgments}: 
I would first like to express my sincere gratitude to my advisors, Damian Brotbek and Gianluca Pacienza, for their time, guidance, and valuable feedback on this work and its earlier versions. This paper owes a great deal to Francesco Bastianelli, who delivered a series of lectures at the IECL during which he generously proposed this problem. The present work is deeply inspired by his research. Finally, I warmly thank Steve Balme for his encouragement throughout the development of this work.
 
\begin{center}
\section{Classical results around the covering gonality}
\end{center}
In this first section, we review fundamental results that will be used throughout the paper. 
\vspace{0.2cm}

We start by recalling what a covering family of $c$-gonal curves is: 
\vspace{0.2cm}
\begin{ddef}
 A family of $c$-gonal curves over a variety $X$ consists of a smooth morphism $\eta : \mathcal{C} \longrightarrow \mathcal{T} $ whose generic fibre $\eta^{-1}(t), t \in \mathcal{T}$, is a smooth irreducible curve of gonality $c$, together with a dominant rational morphism $f : \mathcal{C} \longrightarrow X$,  such that the restriction map  $f\vert_{\mathcal{C}_{t}} : \mathcal{C}_{t} \longrightarrow X $ is birational onto its image for general $t \in \mathcal{T}$.
\end{ddef}

\begin{rmq}
Following \cite[Remark 1.5]{degreehypersurfaces}, we may assume that $T$ and $\mathcal{C}$ are smooth, and that $\dimension(T)=\dimension(X)-1$, $\dimension(\mathcal{C})=\dimension(X)$. 
\end{rmq}

Using the notion of a covering family of curves, we can now define the covering gonality: 

\begin{ddef}
 The covering gonality of $X$, denoted $\covgon(X)$, is the minimal integer $c$ such that $X$ admits a covering family of $c$-gonal curves. 
\end{ddef}

We now fix a covering family of $c$-gonal curves over $X$ i.e we have a covering family of curves $\eta$ $:$ $\mathcal{C} \longrightarrow T$, with the dominant map $f : \mathcal{C} \longrightarrow X$.  After a suitable  base change, one can assemble the morphism computing the gonality of $\mathcal{C}_{t}$. More precisely, the following diagram commutes: \\
\begin{center}
\begin{tikzcd}
	{\mathcal{C}} &&& {T\times \mathbb{P}^{1}} \\
	\\
	&&& T
	\arrow["\phi", from=1-1, to=1-4]
	\arrow["\eta"', from=1-1, to=3-4]
	\arrow["{pr_{1}}", from=1-4, to=3-4]
\end{tikzcd}
\end{center}

\vspace{0.35cm}

For a general $t \in T$, the morphism $\phi\vert_{\mathcal{C}_{t}} : \mathcal{C}_{t} \longrightarrow \mathbb{P}^{1}$ is a morphism computing the gonality of the curve $\mathcal{C}_{t}$. Since each $\mathcal{C}_{t}$ has gonality $c$, the restriction $\phi\vert_{\mathcal{C}_{t}}$ is a degree $c$ morphism and a general fibre consists of $c$ points. We denote by  $\phi^{-1}\vert_{\mathcal{C}_{t}}(y)=\phi^{-1}(t,y)=\{q_{1},...,q_{c}\} \subset \mathcal{C}_{t}$ the fibre of the map $\phi\vert_{\mathcal{C}_{t}}$ over general points $y \in \mathbb{P}^{1}$ and $t \in T $. From the map $\phi$, the author in \cite{bastianelli2012symmetric} constructs a correspondence of degree $c$ in $X \times (T \times  \mathbb{P}^{1})$ i.e. they build a variety $\Gamma \subset X \times (T\times \mathbb{P}^{1})$ which is the graph of $\phi$ together with the projection $\pi : \Gamma  \longrightarrow T \times \mathbb{P}^{1}$ which is a generically finite morphism of degree $c$. 
A more detailed version of this construction can be found in \cite[Example 4.7]{bastianelli2012symmetric}, particularly the base change part.  \\
Exploiting this, the work in \cite{gonalitynoether} describes some geometrical restrictions on the points $f(q_{1}),...,f(q_{c})$: 
\begin{ddef}
Let $\mathcal{L}$ be a linear system on $X$ and $Z=\{P_{1},...,P_{k}\} \subset X$ a set of $k$ points. We say that $Z$ satisfies the Cayley-Bacharach condition with respect to $\mathcal{L}$ if for every effective divisor $D \in \mathcal{L}$ such that $P_{1},...,P_{i-1},P_{i+1},...,P_{k} \in \supp(D)$, then $P_{i} \in D$.
\end{ddef}

The two main results we will use are the two following statements from  \cite{gonalitynoether}:

\begin{prop}[{\cite[Proposition 2.3]{gonalitynoether}}]
\label{Cayley}
Let $\Gamma$ be the correspondence constructed above. Then for a general $t \in T$, the fibre of the map  $\phi\vert_{\mathcal{C}_{t}}$ satisfies the Cayley-Bacharach condition with respect to $\vert K_{X} \vert$.
\end{prop}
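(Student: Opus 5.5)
The plan is the standard trace argument of Bastianelli and Lopez--Pirola. Since $\dimension(\mathcal{C})=\dimension(X)$ and $f$ is dominant, $f$ is generically finite. The correspondence $\Gamma\subset X\times(T\times\mathbb{P}^{1})$ then defines a trace map on holomorphic top forms,
\[
\operatorname{Tr}_{\pi}\circ \,{\mathrm{pr}}_{X}^{*} \colon H^{0}(X,K_{X}) \longrightarrow H^{0}(T\times\mathbb{P}^{1},K_{T\times\mathbb{P}^{1}}).
\]
Concretely, on a Zariski open subset $U\subset T\times\mathbb{P}^{1}$ over which $\pi$ is étale of degree $c$, with local sections $\sigma_{1},\dots,\sigma_{c}$ sending $(t,y)$ to the points $(f(q_{j}),(t,y))$, we set $\operatorname{Tr}(\omega)=\sum_{j}\sigma_{j}^{*}\,{\mathrm{pr}}_{X}^{*}\omega$. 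The first step is to recall that this form extends holomorphically to all of $T\times\mathbb{P}^{1}$. This is a classical fact for traces along generically finite correspondences between smooth varieties (after resolving $\Gamma$, pulled-back forms stay holomorphic and push-forward of top holomorphic forms is holomorphic). We may assume $T$ smooth by the preceding Remark.

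The second step shows that the target is zero. We have $K_{T\times\mathbb{P}^{1}}={\mathrm{pr}}_{1}^{*}K_{T}\otimes{\mathrm{pr}}_{2}^{*}K_{\mathbb{P}^{1}}$. By Künneth,
\[
H^{0}(T\times\mathbb{P}^{1},K_{T\times\mathbb{P}^{1}})=H^{0}(T,K_{T})\otimes H^{0}(\mathbb{P}^{1},K_{\mathbb{P}^{1}})=0.
\]
If $T$ is only quasi-projective, we argue on a smooth compactification, or restrict to a general fibre $\{t\}\times\mathbb{P}^{1}$. There a local computation shows that a holomorphic top form on $T\times\mathbb{P}^{1}$ is locally $g(t,y)\,dt_{1}\wedge\dots\wedge dt_{n}\wedge dy$, and this forces $g$ to be holomorphic in $y$ on all of $\mathbb{P}^{1}$ with a double zero at infinity, hence $g=0$. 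So $\operatorname{Tr}(\omega)=0$ for every $\omega\in H^{0}(X,K_{X})$.

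The third step deduces the Cayley--Bacharach property. Fix a general point $(t,y)\in U$ and write $P_{j}=f(q_{j})$. Since the correspondence is étale there and $f$ is generically finite with differential of maximal rank at general points, each map $f\circ\sigma_{j}$ is a local biholomorphism from a neighbourhood $V$ of $(t,y)$ onto a neighbourhood of $P_{j}$. The identity $\sum_{j}(f\circ\sigma_{j})^{*}\omega=0$ holds on $V$. Suppose $\omega$ is a section defining a divisor $D\in\vert K_{X}\vert$ passing through all $P_{j}$ with $j\neq i$. Evaluating the identity at the point $(t,y)$ gives
\[
(f\circ\sigma_{i})^{*}\omega\,\big|_{(t,y)}=-\sum_{j\neq i}(f\circ\sigma_{j})^{*}\omega\,\big|_{(t,y)}=0.
\]
Because $f\circ\sigma_{i}$ is a local biholomorphism, this forces $\omega(P_{i})=0$, i.e. $P_{i}\in D$. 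This is exactly the Cayley--Bacharach condition with respect to $\vert K_{X}\vert$ for the fibre over a general $(t,y)$, hence for general $t$ and general $y$.

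The main obstacle is the first step: making precise that the trace is well defined and extends holomorphically across the branch locus and the indeterminacy of $f$. This requires careful bookkeeping for the possibly singular correspondence $\Gamma$. I would resolve $\Gamma$, note that pullbacks of holomorphic forms remain holomorphic, and invoke the standard fact that the push-forward of top holomorphic forms under a generically finite morphism of smooth varieties is holomorphic, by Hartogs together with local boundedness near the branch divisor. A secondary point is the genericity claim: the points $f(q_{j})$ must be distinct and $f$ must be étale there. This follows from $f\vert_{\mathcal{C}_{t}}$ being birational onto its image and $f$ being generically finite.
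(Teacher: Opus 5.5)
Your proposal is correct and follows essentially the same route as the source: the paper gives no proof of its own and simply quotes \cite[Proposition 2.3]{gonalitynoether}, whose proof is this Mumford-style trace argument. There, the trace of $\omega\in H^{0}(X,K_{X})$ to $T\times\mathbb{P}^{1}$ vanishes because $T\times\mathbb{P}^{1}$ has no nonzero holomorphic top forms, and evaluating the local identity at a general point gives the Cayley--Bacharach condition. The only slip is notational: the local biholomorphisms in your third step should be $\mathrm{pr}_{X}\circ\sigma_{j}$ rather than $f\circ\sigma_{j}$, since $f$ is defined on $\mathcal{C}$, not on $\Gamma$.
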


\begin{lem}[{\cite[Lemma 2.4]{gonalitynoether}}]
\label{CayleyLine}
Let $Z=\{P_{1},...,P_{k}\} \subset \mathbb{P}^{n}$ be a set of $k$ points satisfying the Cayley-Bacharach condition with respect to the linear system $\vert \mathscr{O}_{\mathbb{P}^{n}}(m)\vert$. Then $k\geq m+2$. Moreover, if $k\leq 2m+1$, then the points in $Z$ lie on a line in $\mathbb{P}^{n}$.
\end{lem}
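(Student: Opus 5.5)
The first assertion is proved by an explicit separating divisor, and the second by induction on $m$, using hyperplane sections to pass from $|\mathscr{O}_{\mathbb{P}^{n}}(m)|$ to $|\mathscr{O}_{\mathbb{P}^{n}}(m-1)|$.

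\textbf{The bound $k\geq m+2$.} Suppose instead that $k\leq m+1$ and fix $i$. For each $j\neq i$, choose a hyperplane $H_j$ through $P_j$ with $P_i\notin H_j$. Add $m-(k-1)\geq 0$ further hyperplanes, none passing through $P_i$. The sum is a divisor $D\in|\mathscr{O}_{\mathbb{P}^{n}}(m)|$ that contains every $P_j$ with $j\neq i$ but not $P_i$. This contradicts the Cayley--Bacharach condition.

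\textbf{Restriction to degree $m-1$.} Let $H$ be a hyperplane that does not contain all of $Z$, and set $Z'=Z\setminus H\neq\emptyset$. Then $Z'$ satisfies the Cayley--Bacharach condition with respect to $|\mathscr{O}_{\mathbb{P}^{n}}(m-1)|$. Indeed, suppose $D'\in|\mathscr{O}(m-1)|$ contains all points of $Z'$ except $P$. Then $D'+H\in|\mathscr{O}(m)|$ contains all of $Z\setminus\{P\}$. The hypothesis on $Z$ gives $P\in D'+H$, and since $P\notin H$ we get $P\in D'$. Applying the first part in degree $m-1$ gives $|Z'|\geq m+1$, hence $|Z\cap H|\leq k-m-1\leq m$.

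\textbf{Collinearity when $k\leq 2m+1$.} We argue by induction on $m\geq 1$, with the case $n=1$ trivial, so assume $n\geq 2$.

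For $m=1$, the first part forces $k=3$. If the three points were not collinear, the line through $P_1,P_2$, extended to a hyperplane avoiding $P_3$, would violate the condition.

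For $m\geq 2$, assume $Z$ is not collinear.
\begin{enumerate}
\item Take the line $L$ through $P_1$ and $P_2$. Since $Z\not\subset L$, choose a hyperplane $H\supset L$ missing some point of $Z$.
\item The set $Z'=Z\setminus H$ satisfies Cayley--Bacharach for degree $m-1$ and has $|Z'|\leq k-2\leq 2(m-1)+1$. By induction, $Z'$ lies on a line $L'$. By the restriction step, $|Z'|\geq m+1$.
\item Since $Z\not\subset L'$, pick a hyperplane $H''\supset L'$ missing a point of $Z$. Then $Z\cap H''\supseteq Z'$ has at least $m+1$ points.
\item This contradicts $|Z\cap H''|\leq m$ from the restriction step applied to $H''$.
\end{enumerate}
Hence $Z$ lies on a line.

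The delicate point is the choice of hyperplanes. We need $H\supset L$ (respectively $H''\supset L'$) that does not contain all of $Z$. Such a hyperplane exists precisely because $Z$ is not contained in the line, and because $n\geq 2$ leaves room to avoid a chosen point off the line.
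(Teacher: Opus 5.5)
Your proof is correct: the union of hyperplanes separating $P_i$ from the other points gives $k\geq m+2$. The induction on $m$ then works by discarding the points on a hyperplane and passing to the Cayley--Bacharach condition for $|\mathscr{O}_{\mathbb{P}^{n}}(m-1)|$, first for a hyperplane through a line meeting $Z$ in two points, then for a hyperplane through the line $L'$ carrying at least $m+1$ points. The paper gives no proof of its own and only cites \cite[Lemma 2.4]{gonalitynoether}; your argument appears to be essentially the standard one from that reference, though I am relying on recollection of that proof rather than on anything in this paper.
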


These two results are central to our proof as the canonical bundle of a hypersurface in $\mathbb{P}^{n+1}\times \mathbb{P}^{m+1}$ can be computed explicitly. In addition, Proposition 1.5 highlights the role of the canonical bundle's positivity: the more positive the canonical bundle is, the more geometric constraints it imposes on the fibre of the correspondence.
To conclude this section, we recall an inequality from \cite[Lemma 6.7]{stapleton2020degree}. This result was initially established for a very general smooth variety in a projective space in \cite[Proposition 3.8]{degreehypersurfaces} using a variational method developed by Voisin and Ein developed in \cite{voisin1996conjecture}, \cite{ein1988subvarieties}. In \cite{stapleton2020degree}, the authors later generalized this inequality to the setting of very general smooth hypersurfaces in products of projective spaces. 
\begin{lem}[{\cite[Lemma 6.7]{stapleton2020degree}}]
\label{StapUll}
Consider $X \subset \mathbb{P}^{n+1} \times \mathbb{P}^{m+1}$ a very general, smooth hypersurface of bi-degree $(a,b)$ with $a,b\geq 2$ . Then, for any subvariety $E\subset X$ swept out by curves of gonality $c$, the following inequality holds: 
\begin{center}
    $c \geq \dimension(E) + \minimum\{a-n-1,b-m-1\}-n-m$.
\end{center}
\end{lem}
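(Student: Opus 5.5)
The plan is to adapt the variational argument of \cite[Proposition 3.8]{degreehypersurfaces}, which goes back to Ein and Voisin \cite{ein1988subvarieties, voisin1996conjecture}, to the bi-graded setting. Throughout, $e=\dimension(E)$.

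\textbf{Step 1: spread the situation over the universal family.} Let $B\subset \vert\mathscr{O}(a,b)\vert$ be the open set parametrising smooth hypersurfaces and $\mathcal{X}\to B$ the universal hypersurface. Since $X$ is very general, a countability argument on relative Hilbert schemes applies. After an étale base change $B'\to B$, the subvariety $E$ and the family of $c$-gonal curves sweeping it extend to a family $\mathcal{E}\subset \mathcal{X}_{B'}$ with fibres $E_s$ of dimension $e$. Each $E_s$ is swept out by $c$-gonal curves. We then pass to a resolution $\widetilde{E}\to E$.

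\textbf{Step 2: positivity of $K_{\widetilde E}$.} This is the key step. Let $\mathcal{H}$ be the universal family in $\big(\mathbb{P}^{n+1}\times\mathbb{P}^{m+1}\big)\times B$. Use the Ein--Voisin global generation statement for $T_{\mathcal{H}}$ twisted by $\mathscr{O}(1,0)$ and $\mathscr{O}(0,1)$. On products this holds because the evaluation maps of bi-homogeneous polynomials are surjective, exactly as in \cite{stapleton2020degree}. Taking wedge powers and restricting to $\widetilde{E}$, adjunction gives
\[K_X=\mathscr{O}_X(a-n-2,\,b-m-2).\]
From this one deduces that the canonical linear system $\vert K_{\widetilde E}\vert$ contains, up to a fixed divisor, the pull-back of $\vert\mathscr{O}(k_1,k_2)\vert$. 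Here $k_1,k_2$ are linear in $e$, with
\[\min\{k_1,k_2\}= e+\min\{a-n-1,\,b-m-1\}-n-m-2.\]
The loss of $n+m+1-e$ comes from the codimension of $E$ in the total space. The symmetric loss in the two factors is what produces the minimum.

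\textbf{Step 3: Cayley--Bacharach and counting.} Apply Proposition \ref{Cayley} to the covering family of $\widetilde{E}$. A general fibre $\{q_1,\dots,q_c\}$ of the gonal map satisfies the Cayley--Bacharach condition with respect to $\vert K_{\widetilde E}\vert$. We may assume the $f(q_i)$ avoid the fixed divisor, so by Step 2 their images satisfy it with respect to $\vert\mathscr{O}(k_1,k_2)\vert$.

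We then need a bi-graded analogue of Lemma \ref{CayleyLine}: a set satisfying Cayley--Bacharach for $\mathscr{O}(k_1,k_2)$ has at least $\min\{k_1,k_2\}+2$ points. To prove it, suppose there are at most $\min\{k_1,k_2\}+1$ points and fix one of them, $P$. For each other point pick a hypersurface of bi-degree $(1,0)$ or $(0,1)$ through it but not through $P$. Their product, completed by general factors to bi-degree $(k_1,k_2)$, contains all points but $P$, a contradiction. This yields
\[c\geq e+\min\{a-n-1,\,b-m-1\}-n-m.\]

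\textbf{Main obstacle.} The main difficulty is Step 2. One has to control precisely which bi-degree survives in $K_{\widetilde E}$ after the variational argument. The first thing I would try is to run the Ein--Voisin surjectivity separately for each factor, and then take the worse of the two twists.
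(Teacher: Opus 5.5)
The paper does not prove this lemma. It imports it from \cite[Lemma 6.7]{stapleton2020degree}. Your architecture matches the argument behind that citation and \cite[Proposition 3.8]{degreehypersurfaces}: spread out, get Ein--Voisin positivity of $K_{\widetilde E}$, then apply Cayley--Bacharach. Step 1, your target values of $k_1,k_2$, and your bi-graded Cayley--Bacharach count in Step 3 are all correct.

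\textbf{The gap is Step 2.} This step carries the proof, and the mechanism you propose for it fails. Twisting $T_{\mathcal X}\vert_X$ by $\mathscr{O}(1,0)$ or by $\mathscr{O}(0,1)$ separately does not give a globally generated bundle. The surjectivity of evaluation maps you invoke only gives global generation of $\mathscr{O}(a,b)$ and of the tangent bundle of $\mathbb{P}^{n+1}\times\mathbb{P}^{m+1}$, not of the twisted kernel.

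To see this, fix $w=(u,v)\in X$, let $F_X$ be the equation of $X$, and parametrise by the vector space $H^0(\mathscr{O}(a,b))$. The vertical part of $T_{\mathcal X}\vert_X$ is the bundle $M$ with $M_w=\{G: G(w)=0\}$. A global section of $M\otimes\mathscr{O}(1,0)$ has the form $\sum_i G_i\otimes\ell_i$ with $\ell_i$ of bidegree $(1,0)$ and $\sum_i\ell_iG_i=0$. Its value at $w$ is $\sum_i\ell_i(u)G_i$, whose restriction to $\{u\}\times\mathbb{P}^{m+1}$ equals $(\sum_i\ell_iG_i)(u,\cdot)=0$. Working on $X$ and including the horizontal directions only adds restrictions spanned by $F_X(u,\cdot)$, the $\partial_{x_j}F_X(u,\cdot)$ and the $y_l\partial_{y_k}F_X(u,\cdot)$. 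That space has dimension bounded independently of $b$. By contrast, the forms vanishing at $w$ restrict to all degree-$b$ forms on $\mathbb{P}^{m+1}$ vanishing at $v$. So $T_{\mathcal X}\vert_X\otimes\mathscr{O}(1,0)$ is not globally generated once $b$ is large, and running Ein--Voisin on each factor separately and taking the worse twist cannot work.

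\textbf{The fix.} The right input is global generation of $T_{\mathcal X}\vert_X\otimes\mathscr{O}(1,1)$ \cite[Lemma 3.20]{stapleton2017degree}, which is also what the paper uses in Proposition \ref{Inequality}. Let $p=\dim B$, $e=\dimension(E)$ and $\nu:\widetilde E\to E$. Then
\[
\Omega^{p+e}_{\mathcal X}\vert_X\cong{\textstyle\bigwedge^{n+m+1-e}}\big(T_{\mathcal X}\vert_X\otimes\mathscr{O}(1,1)\big)\otimes\mathscr{O}_X(a-2n-m-3+e,\;b-2m-n-3+e).
\]
Compose this with the generically surjective map $\nu^*\big(\Omega^{p+e}_{\mathcal X}\vert_E\big)\to K_{\widetilde E}$, which exists because $\dim\mathcal E=p+e$. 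This gives $\vert K_{\widetilde E}\vert\supseteq\nu^*\vert\mathscr{O}(k_1,k_2)\vert+F$ for a fixed effective divisor $F$, with $k_1=a-2n-m-3+e$ and $k_2=b-2m-n-3+e$. These are exactly your values.

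So the symmetric loss of $n+m+1-e$ in both factors comes from the single $\mathscr{O}(1,1)$-twist, not from a minimum of two factor-wise twists. With this replacement, your Step 3 finishes the proof. Alternatively, use the Segre route of Proposition \ref{FibreCorres}: $\mathscr{O}(k,k)=i^*\mathscr{O}(k)$ with $k=\min\{k_1,k_2\}$, followed by Lemma \ref{CayleyLine}.
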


\begin{rmq}
\label{NoLine}
For $c=1$ and $a \geq 2n+m+1, b\geq 2m+n+1$, then X contains no rational curves. We will use later on this remark. 
\end{rmq}

\begin{center}
\section{Lines over $\mathbb{P}^{n+1}\times \mathbb{P}^{m+1}$}
\end{center}
 In this section we analyze lines in $\mathbb{P}^{n+1}\times\mathbb{P}^{m+1}$. Lemma 1.6 indicates that the fibres of a correspondence $Z$ will lie on lines as long as they satisfy the Cayley-Bacharach condition with respect to a multiple of  $\mathscr{O}_{\mathbb{P}^{n+1}}(1)$. It is natural, then, to ask how one can generalize this in $\mathbb{P}^{n+1} \times \mathbb{P}^{m+1}$. We say that $\ell$ is a line in  $\mathbb{P}^{n+1}\times \mathbb{P}^{m+1}$ if it is an irreducible curve such that $\degree_{\mathscr{O}_{\mathbb{P}^{n+1}\times \mathbb{P}^{m+1}}(1,1)}(\ell)=1$ where $\degree_{\mathscr{O}_{\mathbb{P}^{n+1}\times \mathbb{P}^{m+1}}(1,1)}(\ell)$ is  the degree of $\ell$  with respect to $\mathscr{O}_{\mathbb{P}^{n+1}\times \mathbb{P}^{m+1}}(1,1)$. To lighten the notation, whenever no confusion is possible, the line bundle $\mathscr{O}_{\mathbb{P}^{n+1}\times \mathbb{P}^{m+1}}(1,1)$ will be denoted $\mathscr{O}(1,1)$. We denote by $F(\mathbb{P}^{n+1} \times \mathbb{P}^{m+1})$ the Fano variety of lines of $\mathbb{P}^{n+1}\times \mathbb{P}^{m+1}$. The next proposition shows that a line in $\mathbb{P}^{n+1} \times \mathbb{P}^{m+1}$ is actually either a line in $\mathbb{P}^{n+1}$ or in  $\mathbb{P}^{m+1}$.
\vspace{0.35cm}

\begin{prop}
\label{FanoOfLine}
 For any  $m,n \in \mathbb{N}$ the Fano variety of lines of $\mathbb{P}^{n+1} \times \mathbb{P}^{m+1}$ decomposes as:
\begin{center}
$ F(\mathbb{P}^{n+1} \times \mathbb{P}^{m+1})=(Gr(1,n+1)\times\mathbb{P}^{m+1}) \sqcup (\mathbb{P}^{n+1} \times Gr(1,m+1) )$
\end{center}

where $Gr(1,n+1)$ is the Grassmannian of lines of $\mathbb{P}^{n+1}$ and $Gr(1,m+1)$ is the Grassmannian of lines of $\mathbb{P}^{m+1}$.

\end{prop}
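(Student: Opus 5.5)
The plan is to use the bidegree of a curve. Let $\ell \subset \mathbb{P}^{n+1}\times\mathbb{P}^{m+1}$ be an irreducible curve, and write $p_1,p_2$ for the two projections and $h_1,h_2$ for the pullbacks of the hyperplane classes. Then $\mathscr{O}(1,1)=h_1+h_2$, so
\[
\degree_{\mathscr{O}(1,1)}(\ell)=d_1+d_2, \qquad d_i:=h_i\cdot \ell \geq 0,
\]
because each $h_i$ is globally generated, hence nef. Thus $\degree_{\mathscr{O}(1,1)}(\ell)=1$ forces $(d_1,d_2)\in\{(1,0),(0,1)\}$.

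Consider the case $(d_1,d_2)=(0,1)$. Since $d_1=h_1\cdot\ell=0$ and $h_1$ is ample on $\mathbb{P}^{n+1}$, the projection $p_1(\ell)$ cannot be a curve: a curve has positive degree. So $p_1(\ell)$ is a point $x$, and $\ell\subset\{x\}\times\mathbb{P}^{m+1}$. Next, $d_2=\deg(p_{2}|_\ell)\cdot\deg p_2(\ell)=1$, so $p_2|_\ell$ is birational onto a degree-one curve, i.e. a line $L\subset\mathbb{P}^{m+1}$. Since $p_2$ restricted to $\{x\}\times\mathbb{P}^{m+1}$ is an isomorphism, $\ell=\{x\}\times L$. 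The case $(1,0)$ is symmetric and gives $\ell=L'\times\{y\}$ with $L'\in Gr(1,n+1)$.

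Conversely, every $\{x\}\times L$ and every $L'\times\{y\}$ has $\mathscr{O}(1,1)$-degree one. This yields a bijection of sets between $F(\mathbb{P}^{n+1}\times\mathbb{P}^{m+1})$ and the two products. The two families are disjoint because their bidegrees differ. They are also open and closed components of the Hilbert scheme, since the bidegree is locally constant in flat families.

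To make this a scheme-level identification, I would note the following. For the Segre embedding, the Fano scheme of lines of a product is smooth at each of these lines. For $\ell=\{x\}\times L$, we have $N_{\ell}=\mathscr{O}^{n+1}\oplus\mathscr{O}(1)^{m}$. Its $h^0=n+1+2m$ equals $\dim(\mathbb{P}^{n+1}\times Gr(1,m+1))$, and $h^1=0$. Hence the natural morphisms from the products, given by the universal families, are bijective onto smooth components. They are therefore isomorphisms.

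I expect the scheme-theoretic step, rather than the set-theoretic classification, to be the only real subtlety. If the paper only needs the set-theoretic statement, the nefness argument already suffices.
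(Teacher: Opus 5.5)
Your proposal is correct and follows the paper's argument: you write the $\mathscr{O}(1,1)$-degree of $\ell$ as the sum of the degrees coming from the two projections, conclude that one projection is constant, and conclude that the other maps $\ell$ birationally onto a line. Your version is more careful than the paper's, which omits the factor $\deg(p_i|_\ell)$ and the nonnegativity (nefness) justification. Your scheme-theoretic normal-bundle step is an extra the paper does not attempt, since it only uses the set-theoretic description.
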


\begin{proof}
Let $\ell$ be a line in $\mathbb{P}^{n+1}\times \mathbb{P}^{m+1}$ and  $i : \ell \hookrightarrow \mathbb{P}^{n+1}\times \mathbb{P}^{m+1}$ the embedding of $\ell$ in $\mathbb{P}^{n+1}\times \mathbb{P}^{m+1}$. The immersion $i$ comes with two natural morphisms, $i_{1}=\pi_{1}\circ i$, $i_{2}=\pi_{2} \circ i$ that are the projections of $\ell$ respectively in $\mathbb{P}^{n+1}$ and $\mathbb{P}^{m+1}$. Suppose the images of $i_{1}$  and $i_{2}$ are one-dimensional. Then $\degree_{\mathscr{O}(1,1)}(\ell)=\degree(i_{1}(\ell))+\degree(i_{2}(\ell))$, where the degree of $i_{1}(\ell)$ and $i_{2}(\ell)$ are computed with respect to the hyperplane divisor. But as  $\degree_{\mathscr{O}(1,1)}(\ell)=1$, this implies that one of the curves $i_{1}(\ell),i_{2}(\ell)$ is of degree 0, which is a contradiction. This means one of the map $i_{1},i_{2}$ is constant, while the other is a line in $\mathbb{P}^{n+1}$ or $\mathbb{P}^{m+1}$. 
\end{proof}

\vspace{0.35cm}

\begin{center}
\section{Lower bound for the covering gonality}
\end{center}
In this section, adapting techniques from \cite[Section 2.4]{bastianelli2019gonality}, we derive a lower bound for $\covgon(X)$.  The approach of \cite{bastianelli2019gonality} mostly consists in showing that a curve $C$ passing through a general point $x$ of general hypersurface $X \subset \mathbb{P}^{n}$ of degree $d \geq 2n+2$ and computing $\covgon(X)$ must live in the cone of lines passing through $x$ with some multiplicity (depending on the gonality of the curve and the degree of the hypersurface). Here, we show that the same holds with our previously defined lines for a hypersurface in a product of projective space. Then, we will rely on the work of Stapleton and Ullery in  \cite{stapleton2020degree} mentioned above. The obtained geometrical constraints on the curves together with the idea of seeing $X$ as a family of hypersurfaces enable us to show that these curves must lie in one of the fibres of the family of hypersurfaces. In particular, this means that the transversal curves i.e. curves that do not lie in one of the fibres of the family, do not compute $\covgon(X)$. \\
Now let us fix the notation. Let $X$ be a very general smooth hypersurface in $\mathbb{P}^{n+1} \times \mathbb{P}^{m+1}$ of bi-degree $(a,b)$ with $n,m \in \mathbb{N}^{*}$. In our case, a standard computation yields: \\
 \begin{center}
 $\omega_{X}=\mathscr{O}_{\mathbb{P}^{n+1} \times
 \mathbb{P}^{m+1}}(a-n-2,b-m-2)$\\
 \end{center}
 and we denote 
\[
    k:=min\{a-n-2,b-m-2\}
\]
We can remark that to have $\omega_{X}$ ample is equivalent to have $a \geq n+3, b\geq m+3$, i.e. $k\geq 1$.
First, inspired by \cite[Theorem 3.5]{gonalitynoether}, we show that for a curve $C$ lying on $X$, the fibre of a morphism computing the gonality of $C$ must be on a line :

\begin{prop}
\label{FibreCorres}
Let $X$ be a hypersurface in $\mathbb{P}^{n+1} \times \mathbb{P}^{m+1}$ of bi-degree $(a,b)$ and assume that $k=min\{(a-n-2,b-m-2)\} \geq 1$. Suppose we have a covering family of curves of gonality $c$ and let $\Gamma \subset X\times (T \times \mathbb{P}^{1})$ be its associated correspondence of degree $c$ with the projection $\pi : \Gamma \longrightarrow T\times \mathbb{P}^{1} $.  If $c\leq 2k+1$,  then for a general $y\in \mathbb{P}^{1}$ and $t\in T$, the points in $\phi^{-1}(t,y)=\{f(q_{1}),...,f(q_{c})\}$, where $q_{1},...,q_{c} \in \mathcal{C}_{t}$, lie on a line in $\mathbb{P}^{n+1} \times \mathbb{P}^{m+1} $.
\end{prop}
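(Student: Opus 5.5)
Our plan is to combine Proposition \ref{Cayley} with Lemma \ref{CayleyLine}, applied separately on each factor, and then use Proposition \ref{FanoOfLine} to assemble the conclusion. Fix general $t\in T$ and $y\in\mathbb{P}^{1}$, and write $Z=\{P_1,\dots,P_c\}$ with $P_i=f(q_i)$. By Proposition \ref{Cayley}, $Z$ satisfies the Cayley--Bacharach condition with respect to $|K_X|=|\mathscr{O}_X(a-n-2,b-m-2)|$. Since $a-n-2\geq k$ and $b-m-2\geq k$, every divisor of $|\mathscr{O}(k,0)|$ and of $|\mathscr{O}(0,k)|$ can be completed to a divisor of $|K_X|$ by adding a fixed effective divisor of bi-degree $(a-n-2-k,b-m-2)$ (resp. $(a-n-2,b-m-2-k)$). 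If this auxiliary divisor is chosen to avoid all the $P_i$, which is possible since we only have finitely many points, then $Z$ satisfies Cayley--Bacharach with respect to $|\mathscr{O}_{\mathbb{P}^{n+1}\times\mathbb{P}^{m+1}}(k,0)|$ and $|\mathscr{O}(0,k)|$ restricted to $X$, and also with respect to the analogous systems pulled back from each factor.

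Next we project. Let $\pi_1(Z)\subset\mathbb{P}^{n+1}$ be the image, possibly with repeated points. The key step is to treat coincidences correctly. Divisors in $|\mathscr{O}(k,0)|$ are pullbacks of degree $k$ hypersurfaces in $\mathbb{P}^{n+1}$, so a point $P_i$ lies on such a divisor iff $\pi_1(P_i)$ lies on the hypersurface. Two cases arise for the distinct image set $Z_1=\pi_1(Z)$.

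\emph{Case $|Z_1|=1$.} All points lie in a single fibre $\{p\}\times\mathbb{P}^{m+1}$.

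\emph{Case $|Z_1|\geq 2$.} We claim $Z_1$ satisfies Cayley--Bacharach for $|\mathscr{O}_{\mathbb{P}^{n+1}}(k)|$. Indeed, a hypersurface through all points of $Z_1$ but one, say $z$, pulls back to a divisor containing every $P_i$ not over $z$. Since $|Z_1|\geq 2$, the set of $P_i$ over $z$ is a proper subset of $Z$. We then need to pass from ``all but one'' to ``all but a cluster''. This is the main obstacle. Our plan is to multiply by an extra divisor of bi-degree $(0,k')$ that separates the $P_i$ inside the fibre over $z$, using the available positivity in the second factor. Alternatively, we can argue by genericity that, for very general $X$ and general $(t,y)$, the points of $Z$ have distinct projections unless they lie in a common fibre. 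We will attempt the first approach, adjusting degrees by using the slack in $K_X$ where needed.

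Granting this, Lemma \ref{CayleyLine} gives $|Z_1|\le c\le 2k+1$, so $Z_1$ lies on a line $L_1\subset\mathbb{P}^{n+1}$. The same argument applies to $Z_2=\pi_2(Z)$. If both $Z_1$ and $Z_2$ had at least two points, the Cayley--Bacharach condition with respect to $|\mathscr{O}(k,k)|$ and the mixed systems would force $Z$ onto a curve in $L_1\times L_2\cong\mathbb{P}^1\times\mathbb{P}^1$. Running Lemma \ref{CayleyLine}-type counting there, with bi-degree $(k,k)$ and $c\le 2k+1$, should give a contradiction. We would check this by choosing divisors of type $(k,0)$ and $(0,k)$ separately, taking products of a line through one point and avoiding the others.

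It follows that exactly one projection is a single point, so $Z\subset L_1\times\{p\}$ or $Z\subset\{p\}\times L_2$. By Proposition \ref{FanoOfLine}, this is a line in $\mathbb{P}^{n+1}\times\mathbb{P}^{m+1}$.
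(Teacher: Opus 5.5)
Your proposal has a genuine gap, and it is at the step that carries the content of the proposition. Two steps are announced but not proved.

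First, the transfer of the Cayley--Bacharach condition from $Z$ to the reduced projection $Z_1=\pi_1(Z)$ is not established. Your first remedy needs a divisor of bidegree $(0,k')$ separating $P_j$ from the other points of $Z$ over the same $z$. There can be up to $c-2\leq 2k-1$ such points, while only degree $b-m-2$ (possibly equal to $k$) is available in the second factor, so nothing guarantees that such a divisor exists. Your second remedy invokes genericity of $X$, which is not a hypothesis here (the statement is for any $X$ with $k\geq 1$), and it essentially assumes the conclusion.

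Second, and more seriously, the case where both projections have at least two points is dismissed with ``should give a contradiction''. This is the heart of the matter. For instance, $2k+1$ points on a $(1,1)$-curve of $L_1\times L_2$ have collinear images in both factors, and you must exclude them. In that configuration, a divisor of type $(k,0)$ alone or $(0,k)$ alone that misses one point contains at most $k$ of the remaining $2k$, so the construction you hint at does not work as stated. (Also, $L_1\times L_2$ is a surface, not a curve.) Once one projection is known to be a point, the other is injective on $Z$ and your factor-wise use of Lemma \ref{CayleyLine} is fine; so everything hinges on the mixed case.

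The paper avoids all of this with the Segre embedding $i$. Writing $\omega_X=i^*\mathscr{O}(k)\otimes\mathscr{O}(E)$ with $E$ effective and missing $Z$, the set $i(Z)$ satisfies Cayley--Bacharach for $|\mathscr{O}_{\mathbb{P}^N}(k)|$. Lemma \ref{CayleyLine} then puts $i(Z)$ on a line of $\mathbb{P}^N$. Since $c\geq k+2\geq 3$ and the Segre variety is cut out by quadrics, that line lies in the Segre variety, hence is a line of the product by Proposition \ref{FanoOfLine}.

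If you want to stay with divisors pulled back from the factors, the mixed case can be handled without projecting at all. To separate $P_j=(u_j,v_j)$, take a union of at most $a-n-2$ pulled-back hyperplanes of $\mathbb{P}^{n+1}$ missing $u_j$ and at most $b-m-2$ of $\mathbb{P}^{m+1}$ missing $v_j$, padded by general hyperplanes to bidegree $(a-n-2,b-m-2)$. Since $c-1\leq 2k\leq (a-n-2)+(b-m-2)$, this works unless at least $k+1$ other points of $Z$ share the first coordinate of $P_j$, or at least $k+1$ share its second. Hence some fibre $F$ of $\pi_1$ or $\pi_2$ contains at least $k+2$ points of $Z$. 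Any point off $F$ then shares each of its coordinates with at most $k-1$ other points of $Z$, so it could be separated. Therefore $Z\subset F$, and Lemma \ref{CayleyLine} applied inside $F$ finishes the proof.
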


\begin{proof}
 Consider the Segre embedding of $X$, i.e., the map $i : X \hookrightarrow \mathbb{P}^{(n+2)(m+2)-1}$ which embeds $X$ as subvariety of a projective space. Let $t\in T,y \in \mathbb{P}^{1}$ be general points, then according to Lemma \ref{Cayley} the points $\phi^{-1}((y,t))=\{f(q_{1}),...,f(q_{c})\}$ satisfy the Cayley-Bacharach condition with respect to $\vert K_{X} \vert$. As, $i^{*} \mathscr{O}(1) \cong \mathscr{O}(1,1)$, we get that $\omega_{X}=\mathscr{O}(a-n-2,b-m-2)=\mathscr{O}(k,k)\otimes \mathscr{O}(a-n-2-k,b-n-2-k) \cong \mathscr{O}(k,k) \otimes \mathscr{O}(E) \cong i^{*}\mathscr{O}(k) \otimes \mathscr{O}(E)$, where $E$ is an effective divisor or a zero divisor. Note that $E$ may be selected so that it avoids the points in $\phi^{-1}((y,t))=\{f(q_{1}),...,f(q_{c})\}$. Now, as $c\leq 2k+1$, Lemma \ref{CayleyLine} yields that the family of $c$-points $i(\phi^{-1}((y,t))=\{i(f(q_{1})),...,i(f(q_{c}))\}$ are on a line $\ell$ in $\mathbb{P}^{(n+2)(m+2)-1}$. This line must be contained in the image of the Segre embedding. Indeed, the image of the Segre embedding is cut out by equations of degree two in $\mathbb{P}^{(n+2)(m+2)-1}$. But $\ell$ intersects $X$ at least at $c$ points which are the points in $i(\phi^{-1}((y,t))$. Moreover, Lemma \ref{CayleyLine} ensures that $c\geq k+2\geq 3$. Then, by Bézout's theorem, the line $\ell$ must lie in the image of the Segre embedding. As $i^{*}\mathscr{O}(1) \cong \mathscr{O}(1,1)$, we conclude that the pullback of the line $\ell$ is a line in $\mathbb{P}^{n+1}\times \mathbb{P}^{m+1}$. This means that the points $f(q_{1}),...,f(q_{c})$ lie on a line in $\mathbb{P}^{n}\times \mathbb{P}^{m}$. 

\end{proof}
From now on, we assume that $a \geq 2n+m+1$ and $b \geq 2m+n+1$. Let $\eta : \mathcal{C} \longrightarrow T, \ f : \mathcal{C} \longrightarrow X$ be a covering family of $c$-gonal curves on $X$, with $c \leq 2k+1$. For general points $y \in \mathbb{P}^{1}$ and $t \in T$, Proposition \ref{FibreCorres} shows that the points in $\phi^{-1}(t,y)=\{f(q_{1}),\dots,f(q_{c})\}$ lie on a line, which we denote by $\ell_{(t,y)} \subset \mathbb{P}^{n+1}\times\mathbb{P}^{m+1}$.

Since $X$ is a very general hypersurface with $a \geq 2n+m+1$ and $b \geq 2m+n+1$, Lemma \ref{NoLine} implies that $X$ does not contain any rational curve. In particular, the schematic intersection $\ell_{(t,y)} \cdot X$ is a zero--cycle on $X$. As $(t,y)\in T\times\mathbb{P}^{1}$ varies, this construction defines a family of lines whose image has dimension $m+n+1$ inside the Fano variety $F(\mathbb{P}^{n+1}\times\mathbb{P}^{m+1})$. We denote by $\mathcal{B} := \overline{\{\ell_{(t,y)} \mid (t,y)\in T\times\mathbb{P}^{1}\}} \subset F(\mathbb{P}^{n+1}\times\mathbb{P}^{m+1})$ the associated congruence of lines, and we fix once and for all a desingularization $\widetilde{\mathcal{B}} \longrightarrow \mathcal{B}$.

We consider the corresponding incidence variety 
$$\mathcal{I} := \{(\ell,x)\mid \ell\in \widetilde{\mathcal{B}},\ x\in \ell\} \subset \widetilde{\mathcal{B}}\times (\mathbb{P}^{n+1}\times\mathbb{P}^{m+1})$$,
together with the natural projections $p : \mathcal{I} \longrightarrow \widetilde{\mathcal{B}}, \ \mu : \mathcal{I} \longrightarrow \mathbb{P}^{n+1}\times\mathbb{P}^{m+1}$. This yields the following diagram:
\begin{center}
\begin{tikzcd}[column sep=4em, row sep=2em, ampersand replacement=\&]
	\mathcal{I} \arrow[r, "p"] \arrow[rr, bend left=20, "\mu"] \& \widetilde{\mathcal{B}} \arrow[d] \& \mathbb{P}^{n+1}\times\mathbb{P}^{m+1} \\
	\& \mathcal{B} \& 
\end{tikzcd}
\end{center}
Since $T\times\mathbb{P}^{1}$ is irreducible, the congruence $\mathcal{B}$ is contained in a single irreducible component of $F(\mathbb{P}^{n+1}\times\mathbb{P}^{m+1}) = Gr(1,n+1)\times \mathbb{P}^{m+1} \;\sqcup\; \mathbb{P}^{n+1}\times Gr(1,m+1)$. Up to exchanging the two factors, we may therefore assume that $\mathcal{B} \subset Gr(1,n+1)\times \mathbb{P}^{m+1}$.

The above construction admits a fiberwise version. More precisely, let $z\in\mathbb{P}^{m+1}$ and denote by $\mathcal{B}_{z} := \{\ell\in\mathcal{B}\mid \pi_{2}(\ell)=z\} \subset Gr(1,n+1)$ the induced congruence of lines over $z$. The fiber $X_{z}:=\pi_{2}^{-1}(z)$ is a smooth hypersurface of degree $a$ in $\mathbb{P}^{n+1}$. Let $\widetilde{\mathcal{B}}_{z}$ be the inverse image of $\mathcal{B}_{z}$ in $\widetilde{\mathcal{B}}$, and define the corresponding incidence variety $\mathcal{I}_{z} := p^{-1}(\widetilde{\mathcal{B}}_{z})$. The restriction of $\mu$ induces a morphism $\mu_{z} : \mathcal{I}_{z} \longrightarrow \mathbb{P}^{n+1}$, summarized in the following diagram:
\begin{center}
\begin{tikzcd}[column sep=4em, row sep=2em, ampersand replacement=\&]
	\mathcal{I}_{z} \arrow[r, "p"] \arrow[rr, bend left=20, "\mu_{z}"] \& \widetilde{\mathcal{B}}_{z} \arrow[d] \& \mathbb{P}^{n+1} \\
	\& \mathcal{B}_{z} \& 
\end{tikzcd}
\end{center}
This setting allows a finer control on the gonality of the curves in the covering family, depending on the geometry of the congruence $\mathcal{B}_{z}$. This will be crucial in the remainder of the proof.

\begin{prop}
\label{Estimate}
Let $a\geq 2n+m+1,b\geq 2m+n+1$ and $B$ the congruence of lines defined of the covering family of curves of gonality $c$ as defined above. First, either $\mathcal{B} \subset Gr(1,n+1) \times \mathbb{P}^{m+1}$ or $B\subset \mathbb{P}^{n+1} \times Gr(1,m+1)$ and if $B \subset Gr(1,n+1) \times \mathbb{P}^{m+1}$ (resp. $\mathcal{B}\subset \mathbb{P}^{n+1} \times Gr(1,m+1)$), then : 
\begin{center}
    $c\geq a-n$ (resp, $c\geq b-m$).
\end{center}
\end{prop}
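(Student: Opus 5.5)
The plan follows the argument of \cite[Section 2.4]{bastianelli2019gonality}, applied fibrewise to the family $X_z\subset\mathbb{P}^{n+1}$. The dichotomy $\mathcal{B}\subset Gr(1,n+1)\times\mathbb{P}^{m+1}$ or $\mathcal{B}\subset\mathbb{P}^{n+1}\times Gr(1,m+1)$ is already established. Since $T\times\mathbb{P}^1$ is irreducible, Proposition \ref{FanoOfLine} puts $\mathcal{B}$ in one connected component of $F(\mathbb{P}^{n+1}\times\mathbb{P}^{m+1})$. By symmetry it therefore suffices to treat $\mathcal{B}\subset Gr(1,n+1)\times\mathbb{P}^{m+1}$ and prove $c\geq a-n$. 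We may also assume $c\leq 2k+1$, since otherwise $c\geq 2k+2\geq a-n$ under the hypotheses on $a,b$ (one checks $2\min\{a-n-2,b-m-2\}+2\ge a-n$ is not automatic, so if needed this case is handled separately by Lemma \ref{StapUll} with $E=X$). With $c\le 2k+1$, Proposition \ref{FibreCorres} applies, and each fibre $\phi^{-1}(t,y)$ lies on a line $\ell_{(t,y)}$ contained in a fibre $\mathbb{P}^{n+1}\times\{z\}$.

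The first key step is a dimension count in the incidence correspondence. The map $\mathcal{I}\to\mathbb{P}^{n+1}\times\mathbb{P}^{m+1}$ has source of dimension $n+m+2$, and its image contains $X$, since the lines pass through the points $f(q_i)$ covering $X$. Hence either $\mu$ is dominant, so that through a general point of $\mathbb{P}^{n+1}\times\mathbb{P}^{m+1}$ pass finitely many lines of $\mathcal{B}$, or $\mu(\mathcal{I})=X$, which is excluded by Remark \ref{NoLine}. Restricting to a general $z$, the congruence $\mathcal{B}_z$ has dimension $n$ and $\mu_z:\mathcal{I}_z\to\mathbb{P}^{n+1}$ is generically finite and dominant. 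This puts us exactly in the setting of a congruence of lines in $\mathbb{P}^{n+1}$ attached to the smooth hypersurface $X_z$ of degree $a$.

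Next, for a general line $\ell\in\mathcal{B}_z$ we have $\ell\cdot X_z=\sum_{i} f(q_i)+R_\ell$, where $R_\ell$ is a residual effective zero-cycle of degree $a-c$. As in \cite{bastianelli2019gonality}, this residual cycle is studied through the focal locus of the congruence $\mathcal{B}_z$. The $c$ points move with $(t,y)$, but the residual points must be fixed or focal: varying $y$ along $\mathbb{P}^1$ moves the points along the curve $\mathcal{C}_t$. The main obstacle is to show that at least $a-c\leq n$ of the residual intersection points are accounted for by focal points. The first approach to try is the following: a congruence of order $\geq 1$ in $\mathbb{P}^{n+1}$ has at most $n$ focal points on a general line, counted with multiplicity, and every residual intersection point of $\ell$ with $X_z$ must be focal. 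A residual point that is not focal would let the intersection move in a family dominating $X_z$, forcing $\ell$ to be tangent along a positive-dimensional set and contradicting very generality via Lemma \ref{StapUll}. This gives $\deg R_\ell\leq n$, that is, $a-c\leq n$, and hence $c\geq a-n$. Swapping the roles of the factors yields $c\geq b-m$ in the other case.
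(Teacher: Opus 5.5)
Your proposal has a genuine gap at the key step, and the argument it invokes is circular.

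\textbf{The focal step.} You claim that every residual point of $\ell\cdot X_z$ is focal, so that the $n$ focal points of \cite[Lemma 1.1]{arrondo2002line} bound $\deg R_\ell$. This is not justified. The sentence about a non-focal residual point ``forcing $\ell$ to be tangent along a positive-dimensional set'' is not an argument. Even granting that residual points are focal, the focal count is taken with \emph{focal} multiplicity, which says nothing about the contact order of $\ell$ with $X_z$. A residual cycle $(a-c)x_t$ concentrated at a single point is exactly the configuration reached in Proposition \ref{LocalisationOfCurves}, and counting focal points gives no bound on $a-c$ there.

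\textbf{Circularity.} The way moving residual points are ruled out, both in \cite[Section 2.4]{bastianelli2019gonality} and in the proof of Proposition \ref{LocalisationOfCurves}, is the following. The residual curves $D_t$ have gonality at most $a-c$, and one needs $a-c\le n<\covgon(X)$ to conclude that they do not cover $X$. That argument uses $c\ge a-n$, which is precisely the statement you are trying to prove.

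\textbf{The case $c>2k+1$.} Your fallback via Lemma \ref{StapUll} with $E=X$ only gives $c\ge\min\{a-n,b-m\}$. This is weaker than $c\ge a-n$ when $b-m<a-n$. The case is moot anyway, since $\mathcal{B}$ is only defined when $c\le 2k+1$.

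\textbf{The missing idea.} No focal geometry is needed; a direct Cayley--Bacharach computation suffices, and this is the paper's proof.
\begin{itemize}
\item The $c$ points of $\phi^{-1}(t,y)$ lie on $\ell'\times\{z\}$, with $\ell'\subset\mathbb{P}^{n+1}$ a line. By Proposition \ref{Cayley} they satisfy Cayley--Bacharach with respect to $|K_X|$, where $K_X=\mathcal{O}(a-n-2,b-m-2)|_X$.
\item Suppose $c\le a-n-1$, and fix one of the points, $P_i$.
\item For each of the other $c-1$ points, take a hyperplane of $\mathbb{P}^{n+1}$ through it that does not contain $\ell'$; such a hyperplane misses $P_i$.
\item Add $a-n-2-(c-1)\ge 0$ further hyperplanes avoiding $P_i$.
\item Multiply by a form of degree $b-m-2$ on $\mathbb{P}^{m+1}$ that does not vanish at $z$.
\item This gives a divisor in $|K_X|$ through all the points except $P_i$, contradicting Cayley--Bacharach.
\end{itemize}
Equivalently, the restriction of $|\mathcal{O}(a-n-2,b-m-2)|$ to $\ell'\times\{z\}\cong\mathbb{P}^1$ is the complete system $|\mathcal{O}_{\mathbb{P}^1}(a-n-2)|$. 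Lemma \ref{CayleyLine} then forces $c\ge (a-n-2)+2=a-n$. The case $\mathcal{B}\subset\mathbb{P}^{n+1}\times Gr(1,m+1)$ is symmetric and gives $c\ge b-m$.
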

\begin{proof}
 First, as $\mathcal{B}$ is an irreducible subvariety of $ Gr(1,n+1)\times \mathbb{P}^{m+1} \sqcup \mathbb{P}^{n+1}\times Gr(1,m+1)$, the irreducibility of $\mathcal{B}$ implies that either $\mathcal{B} \subset Gr(1,n+1)\times \mathbb{P}^{m+1}$ or $\mathcal{B} \subset \mathbb{P}^{n+1}\times Gr(1,m+1)$. Suppose now that $\mathcal{B} \subset Gr(1,n+1)\times \mathbb{P}^{m+1}$ and that $c\leq a-n-1$. Then, for $t \in T, y\in \mathbb{P}^{1}$, the fibre $\phi^{-1}(t,y)$ lie on a line $l_{(t,y)}$ of the form $(l'_{(t,y)},z) \subset \mathbb{P}^{n+1} \times \mathbb{P}^{m+1}$, where $z\in \mathbb{P}^{m+1}$ and $l'_{(t,y)} \subset \mathbb{P}^{n+1}$ is a line. Moreover, it must satisfy the Cayley-Bacharach condition with respect to $\vert K_{X} \vert$. But $X_{z}$ is a hypersurface of degree $a$ in $\mathbb{P}^{n+1}$, hence $K_{X_{z}} = (a-n-2)H$ where $H$ is the hyperplane divisor of $\mathbb{P}^{n+1}$. Then, one can produce $E' \in \vert K_{X_{z}} \vert$ such that $E'$ passes through all the points in $pr_{1}(\phi^{-1}(t,y))$ but one as there are at most $a-n-1$ points in $pr_{1}(\phi^{-1}(t,y))$. Then, we can produce a divisor $E \in \vert K_{X} \vert \cong \vert \mathcal{O}(a-n-2,b-m-2)\vert $ such that $E$ passes through any point of $\phi^{-1}(t,y)$ but one. Indeed, consider a divisor $E'' \in \vert \mathcal{O}_{\mathbb{P}^{n+1}\times \mathbb{P}^{m+1}}(b-m-2) \vert $ passing through $w\in \mathbb{P}^{m+1}$. Then, the divisor $E'\boxtimes E''$ belongs to $\vert K_{X} \vert $ and passes through every points of $\phi^{-1}(t,y)$ but one. This implies that the fibre does not respect the Cayley-Bacharach condition with respect to $\vert K_{X} \vert$, which is a contradiction. Then $c\geq a-n$. The proof can be done the other way around to obtain that if $\mathcal{B} \subset \mathbb{P}^{n+1} \times Gr(1,m+1)$, then $c \geq b-m$.
\end{proof}

We now adapt \cite[Proposition 2.12]{bastianelli2019gonality} in our setting. In \cite{bastianelli2019gonality} this proposition enables the authors to show that the curves computing the covering gonality of a very general high degree hypersurface lie in specific cones. Following this statement, this leads to the next proposition: 

\begin{prop}
\label{LocalisationOfCurves}
Let $X$ be a very general and smooth hypersurface of bi-degree $(a,b)$ in $\mathbb{P}^{n+1} \times \mathbb{P}^{m+1}$, such that $a \geq 2n+m+1$ and $b\geq 2m+n+1$. Consider a covering family of c-gonal curve over $X$ with $c \leq k-3$. Then for each $t \in T$ there exists $x_{t} \in f(\mathcal{C}_{t})$ such that $f(\mathcal{C}_{t}) \subset V_{x_{t}}^{a-c}:=\{(\ell,z) \in Gr(1,n+1)\times\mathbb{P}^{m+1}, (z,\ell) \cdot X \geq (a-c)x_{t}\}$ or $f(C_{t}) \subset {W}^{b-c}_{x_{t}} : = \{\ell \in \mathbb{P}^{n+1}\times Gr(1,m+1), \ell \cdot X \geq (b-c)x_{t}\} $. Moreover, the gonality of the curve $f(\mathcal{C}_{t})$ is computed by projecting from the point $x_{t}$.
\end{prop}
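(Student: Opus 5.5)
We follow the strategy of \cite[Proposition 2.12]{bastianelli2019gonality}, adapted through the fibrewise congruences $\mathcal{B}_z$. Since $c\leq k-3\leq 2k+1$, Proposition \ref{FibreCorres} applies. For general $(t,y)$, the fibre $\phi^{-1}(t,y)$ therefore lies on a line $\ell_{(t,y)}$. By Proposition \ref{Estimate}, we may assume $\mathcal{B}\subset Gr(1,n+1)\times\mathbb{P}^{m+1}$; the other case is symmetric and produces the sets $W^{b-c}_{x_t}$.

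\textbf{Step 1: the lines through a fixed curve pass through a common point.} Fix a general $t$ and consider the one-parameter family of lines $\{\ell_{(t,y)}\}_{y\in\mathbb{P}^1}$. Each such line has the form $(\ell'_{(t,y)},z_{(t,y)})$. I first want to show that $z_{(t,y)}=z_t$ does not depend on $y$, so that $f(\mathcal{C}_t)\subset X_{z_t}$. Suppose instead that $z$ varies with $y$. Then, for $y$ general, the curve $f(\mathcal{C}_t)$ meets the fibre $X_{z_{(t,y)}}$ only in the $c$ points of $\phi^{-1}(t,y)$. This would force the composite $\pi_2\circ f|_{\mathcal{C}_t}$ to factor through $\phi|_{\mathcal{C}_t}$, giving a map $\mathbb{P}^1\to\mathbb{P}^{m+1}$. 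I then intend to contradict this using the dimension count: $\mathcal{B}$ has dimension $n+m+1$, so the map $\mathcal{B}\to\mathbb{P}^{m+1}$ has fibres $\mathcal{B}_z$ of dimension $n$, and these sweep out $X_z$. I will combine this count with Lemma \ref{StapUll}, which for $c\leq k-3$ forbids the relevant sweeping subvarieties, to conclude that $z$ is constant in $y$.

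\textbf{Step 2: reduction to the fibre hypersurface.} Working inside the degree $a$ hypersurface $X_{z_t}\subset\mathbb{P}^{n+1}$, the lines $\ell'_{(t,y)}$ form a one-dimensional family. Each of them meets $X_{z_t}$ in $a$ points, $c$ of which lie on $f(\mathcal{C}_t)$. The residual intersection $\ell'_{(t,y)}\cdot X_{z_t}-\phi^{-1}(t,y)$ has degree $a-c\geq a-k+3$. Following \cite{bastianelli2019gonality}, I would study the map from $\mathcal{I}_{z}$ to $\mathbb{P}^{n+1}$ and show that this residual part cannot move with $y$ when $c$ is small. The reason is that the residual points sweep a subvariety $E\subset X$ covered by curves whose gonality is controlled by $c$. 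Lemma \ref{StapUll}, together with the hypothesis $c\leq k-3$, should force $\dim E=0$ on each curve, i.e. the residual points are constant. Remark \ref{NoLine} excludes the degenerate case in which the lines lie in $X$. The residual cycle is therefore supported at a single point $x_t$ with multiplicity $a-c$; a connectedness/monodromy argument shows that all the residual points coincide. Hence every $\ell_{(t,y)}$ passes through $x_t$ and satisfies $\ell\cdot X\geq (a-c)x_t$, and this gives $f(\mathcal{C}_t)\subset V^{a-c}_{x_t}$.

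\textbf{Step 3: the gonal map is projection from $x_t$.} Since all the lines $\ell_{(t,y)}$ pass through $x_t$, the gonal map $\phi|_{\mathcal{C}_t}$ sends a point to the line through $x_t$ containing it. This is exactly the projection from $x_t$ restricted to $f(\mathcal{C}_t)$, which proves the final claim.

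The main obstacle is Step 2, namely the application of Lemma \ref{StapUll} to the residual points. One has to build a genuinely covering family of curves on the locus swept by the residual points and bound its gonality in terms of $c$, so that the inequality $c\geq \dim E+k+1-n-m$ combined with $c\leq k-3$ forces $\dim E=0$. I would first try the construction used in \cite[Section 2.4]{bastianelli2019gonality}, taking the correspondence on $T\times\mathbb{P}^1$ given by the residual points, and check whether the degree bounds survive the extra $m$ dimensions coming from the base $\mathbb{P}^{m+1}$.
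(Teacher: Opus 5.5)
Your outline has the right overall architecture, but Step 2, the decisive step, does not work as described. As $y$ varies, the residual points of $\ell_{(t,y)}\cdot X-f(q_1)-\dots-f(q_c)$ trace a curve $D_t$. This curve is dominated by a curve mapping to $\mathbb{P}^1$ with degree at most $a-c$, the number of residual points on a line. So the gonality of $D_t$ is controlled by $a-c$, not by $c$. Moreover, Lemma \ref{StapUll} by itself cannot force $D_t$ to be a point. Even granting a gonality bound $c$, the inequality $c\geq \dim E+k+1-n-m$ together with $c\leq k-3$ only gives $\dim E\leq n+m-4$.

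The paper's proof supplies two ingredients you are missing.
\begin{enumerate}
\item Proposition \ref{Estimate} gives $c\geq a-n$. Hence the components of $D_t$ have gonality at most $a-c\leq n<\covgon(X)$. So the $D_t$ sweep a proper subvariety $S$ with $1\leq s=\dim S\leq n+m$, and Lemma \ref{StapUll} yields $\covgon(S)\geq s$.
\item The key idea is an independent, much smaller upper bound on $\covgon(S)$, coming from the focal locus of the congruence. One takes a component $Z$ of the ramification divisor $R\subset\mathcal{I}$ dominating both $S$ and $\widetilde{\mathcal{B}}$. The rational curves $H_t\subset\widetilde{\mathcal{B}}$ obtained by letting $y$ vary pull back to curves in $Z$ dominating $D_t$, so $\covgon(S)\leq e=\deg(p|_Z)$. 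A general line of $\mathcal{B}$ meets $R$ in $n$ points counted with multiplicity (Arrondo's computation \cite{arrondo2002line}, applied in the fibre congruence $\mathcal{B}_z$). Also, $\order_Z(R)$ is bounded below by \cite[Corollary A.6]{degreehypersurfaces}. Together these give $e\leq\frac{n-1}{n+m+1-s}$, which is incompatible with $\covgon(S)\geq s$.
\end{enumerate}

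Step 1 is not an argument either, and it sits in the wrong place.
\begin{itemize}
\item The factorization of $\pi_2\circ f|_{\mathcal{C}_t}$ through $\phi|_{\mathcal{C}_t}$ via $y\mapsto z_{(t,y)}$ holds automatically, since all points of a fibre of $\phi$ lie on the same line $\ell_{(t,y)}$. It contains no contradiction.
\item The claim that $f(\mathcal{C}_t)$ meets $X_{z_{(t,y)}}$ only in $\phi^{-1}(t,y)$ is unjustified.
\item There is no proper swept-out subvariety to which Lemma \ref{StapUll} could be applied, since the curves $f(\mathcal{C}_t)$ cover $X$.
\end{itemize}
In the paper, containment in a fibre is a consequence rather than a prerequisite. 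So Step 2 should be run without it, using that $\ell_{(t,y)}\cdot X=\ell'_{(t,y)}\cdot X_{z_{(t,y)}}$ has degree $a$ whether or not $z$ moves.

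Once the residual cycle is known to be independent of $y$, every point of its support lies on all the lines $\ell_{(t,y)}$. These lines vary with $y$ because $X$ contains no lines (Remark \ref{NoLine}). Two distinct lines share at most one point, so the support is a single point $x_t$; no monodromy argument is needed. Every line of $Gr(1,n+1)\times\mathbb{P}^{m+1}$ through $x_t$ lies in $\pi_2^{-1}(\pi_2(x_t))$, so $f(\mathcal{C}_t)\subset X_{\pi_2(x_t)}$, and your Step 3 then applies.

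Finally, with the hypothesis exactly as stated the proposition is vacuous. Lemma \ref{CayleyLine}, as used in the proof of Proposition \ref{FibreCorres}, already gives $c\geq k+2$, as does Proposition \ref{Estimate}, and this contradicts $c\leq k-3$. The substantive argument above is what matters for the values of $c$, such as $\covgon(X)$, to which the proposition is applied in the proof of the main theorem.
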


\begin{proof}
First the assumptions about $a$ and $b$ ensure that $k\geq 0$, so that according to Proposition \ref{FibreCorres}, for general $t\in T, y \in \mathbb{P}^{1}$, the points $\{f(q_{1}),...,f(q_{c})\}$ lie on a line $\ell_{(y,t)} \in F(\mathbb{P}^{n+1}\times \mathbb{P}^{m+1})$. We now want to show that $\ell_{(t,y)}\cdot X-f(q_{1})-...-f(q_{c})$ does not vary with $y$ and is supported at a single point. This point will be the point $x_{t}$ of the statement.
Without loss of generality, as $\mathcal{B}$ is irreducible we can assume that the variety $\mathcal{B}$ lies in $Gr(1,n+1)\times \mathbb{P}^{m+1}$. The proof goes the same if $\mathcal{B}$ lies in $\mathbb{P}^{n+1} \times Gr(1,m+1)$.\\
Suppose that $\ell_{(t,y)}\cdot X-f(q_{1})-...-f(q_{c})$ does vary with $y$, this implies that $$\ell_{(t,y)} \cdot X-f(q_{1})-...-f(q_{c})$$ describes a curve $D_{t}$ in $\mathbb{P}^{n+1}\times \mathbb{P}^{m+1}$. But these curves $D_{t}$ cannot cover the hypersurface $X$. More precisely, the closure of the curves $D_{t}$ for $t\in T$ is not equal to $X$. Indeed, there is an irreducible component of $D_{t}$ having gonality strictly less than $\covgon(X)$. To estimate the gonality of an irreducible component of $D_{t}$, one remarks that $D_{t}$ is naturally dominated by the curve $F_{t}:=\{(x,y) \in D_{t}\times \mathbb{P}^{1} \vert x \in D_{t}, x \in \ell_{(t,y)} \}$. The projection of $F_{t}$ on $\mathbb{P}^{1}$ is of degree at most $a-c$ as there are at most $a-c$ points in the support of $\ell_{(t,y)} \cdot X -f(q_{1})-...-f(q_{c})$. Indeed, as $\mathcal{B} \subset Gr(1,n+1) \times \mathbb{P}^{m+1}$, $\ell_{(t,y)}$ intersect $X$ along one of the hypersurfaces of degree $a$ when $X$ is seen as family of hypersurfaces of degree $a$ over $\mathbb{P}^{n+1}$ through the projection $\pi_{2} : X \longrightarrow \mathbb{P}^{m+1}$. This means that the support of $\ell_{(t,y)} \cdot X$ contains at most $a$ point including $f(q_{1}),...,f(q_{c})$. Then every irreducible component of $D_{t}$ has at most gonality $a-c$. As $F_{t}$ dominates $D_{t}$, the same holds for any irreducible component of $D_{t}$. Using the refined estimate obtained in Proposition \ref{Estimate}, as $\mathcal{B} \subset Gr(1,n+1) \times \mathbb{P}^{m+1}$, we deduce that $c$ satisfies $c\geq a-n$, so that $a-c \leq a - a +n =n$. But, we have that $\covgon(X)\geq k+2 \geq n+m$, meaning the curves $D_{t}$ cannot cover $X$. \\
Now, let $S$ be an irreducible component of the closure of the curves $D_{t}$ and denote by $s$ its dimension. As the curves $D_{t}$ do not cover the hypersurface, it is clear that $1\leq s \leq n+m$. We get a contradiction by computing $\covgon(S)$ in two different ways. First, one can get more information on $S$ by looking at the ramification divisor $R$ of $p : \mathcal{I} \longrightarrow \widetilde{\mathcal{B}}$. For a very general $x\in S $, there is a $n+m+1-s$ dimensional family of line of $\mathcal{B}$, passing through $x$. Then, we have an irreducible component $Z$ of $R$ that dominates $S$ through $\mu : \mathcal{I} \longrightarrow \mathbb{P}^{n+1} \times \mathbb{P}^{m+1}$, moreover we choose $Z$ to dominate $\widetilde{\mathcal{B}}$ through $p$. In a more concrete way, this means we are choosing an open dense set of points in $S$ together with an open dense set of lines in $\widetilde{\mathcal{B}}$ such that each point in $S$ belongs to one of these lines in $S$. \\
Using this, we have an upper bound on $\covgon(S)$. To do so, we consider the restricted map $p\vert_{Z} : Z \longrightarrow \widetilde{\mathcal{B}}$ and denote $e$ its degree. We show that $\covgon(S) \leq e$. To do so, remark that $\widetilde{\mathcal{B}}$ is covered by rational curves that are obtained by making vary $y\in \mathbb{P}^{1}$ in $\ell_{(t,y)}$ for a value of $t\in T$ fixed. For $t \in T$ denote by $H_{t}$ the obtained curve. Its pullback under $p\vert_{Z}$ meets $Z$ along a curve $J_{t}$ which by construction dominates $D_{t}$. Then, as one recalls that $S$ is swept out by the closure of the curves $D_{t}$, we get that 
\[
\covgon(S)\leq \gonality(J_{t}) \leq \deg \left(p\vert_{H_{t}}\right)\leq e.
\]
We now give an estimate of $e$. By a classical computation, as in \cite[Lemma 1.1]{arrondo2002line}, we get that for a line $\ell \in \mathcal{B}$, $p^{-1}(\ell)$ meets $R$ at $n$ points counted with multiplicity. Indeed, this line $\ell$ belongs to $\mathcal{B}_{z}$ for a certain $z\in \mathbb{P}^{m+1}$. Then, applying the result \cite[Lemma 1.1]{arrondo2002line} yields the desired inequality.  Then, using \cite[Corollary A.6]{degreehypersurfaces}, we have that $\order_{Z}(R)\geq (n+m-1)-s$, which implies that $e(n+m+1-s)\leq n-1$, this yields $e\leq \frac{n-1}{n+m+1-s}$. \\
But, using Lemma \ref{StapUll} combined with the fact that $k \geq m+n$ (as $a\geq 2n+m+1$, $b\geq 2m+n+1$) gives the following control over $\covgon(S)$: 
\begin{center}
    $\covgon(S)\geq s\geq s+k-n-m$.
\end{center}
This implies that $s\leq \frac{n-1}{n+m+1-s} \leq 1+\frac{s}{n+m+1-s}$, which fails whenever $1\leq s \leq n+m+1$. This contradiction concludes the proof.
\end{proof}

\vspace{0.35cm}
This last proposition enables us to finally prove the desired lower bound on $\covgon(X)$. To do so, we now show that the curves on $X$ which compute the covering gonality must actually live in the fibres of the family of hypersurface $\pi_{1} : X \longrightarrow \mathbb{P}^{n+1}$ or $ \pi_{2} : X \longrightarrow \mathbb{P}^{m+1}$, depending on whether the covering gonality of a smooth and very general hypersurface of degree $a$ in $\mathbb{P}^{n+1}$ is greater than the covering gonality of a smooth and very general hypersurface of degree $b$ in $\mathbb{P}^{m+1}$.

\vspace{0.3cm}

We recall the result obtained in \cite{bastianelli2019gonality}: 

\begin{thm}[{\cite[Theorem 4.1]{bastianelli2019gonality}}]
\label{Bastianelli}
Let $X \subset \mathbb{P}^{n+1}$ be a very general irreducible smooth hypersurface of degree $d\geq 2n+2$. Then: 
\begin{center}
    $ d-\lfloor \frac{\sqrt{16n+9}-1}{2} \rfloor \leq \covgon(X)\leq d-\lfloor \frac{\sqrt{16n+1}-1}{2} \rfloor $.
\end{center}

where both sides of the inequality coincide whenever $n \in \{4\alpha^{2}+3\alpha, 4\alpha^{2}+5\alpha+1, \alpha \in \mathbb{N}\}$
Moreover, the upper bound is obtained by constructing a covering family of $d-\lfloor \frac{\sqrt{16n+1}-1}{2} \rfloor $-gonal curves on $X$. 
\end{thm}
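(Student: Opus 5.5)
The plan is to follow the structure of the arguments in Sections~1--3, specialised to a single projective space. I would prove the lower bound by a dimension count on the cones in which the curves are forced to lie, and the upper bound by an explicit construction of such cones.

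\emph{Lower bound.} Let $c=\covgon(X)$ and fix a covering family $\eta:\mathcal{C}\to T$, $f:\mathcal{C}\to X$ of $c$-gonal curves.
\begin{enumerate}
\item We have $\omega_X=\mathscr{O}_X(d-n-2)$. A priori $c\le d-1$, since projecting a plane section of $X$ from one of its points has degree $d-1$, and $d-1\le 2(d-n-2)+1$ because $d\ge 2n+2$. So Proposition~\ref{Cayley} and Lemma~\ref{CayleyLine} put the general fibre $\{f(q_1),\dots,f(q_c)\}$ of $\phi|_{\mathcal{C}_t}$ on a line $\ell_{(t,y)}$. This is the one-factor version of Proposition~\ref{FibreCorres}.
\item The residual cycle $\ell_{(t,y)}\cdot X-\sum f(q_i)$ has degree $d-c$. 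Arguing as in Proposition~\ref{LocalisationOfCurves}, I would show that it does not move with $y$. Otherwise it traces curves $D_t$ of gonality at most $d-c$. These curves either cover $X$, contradicting minimality of $c$, or sweep a proper subvariety $S$. In the latter case, the ramification estimate of \cite[Lemma 1.1]{arrondo2002line} together with \cite[Corollary A.6]{degreehypersurfaces} bounds $\covgon(S)$ above, while \cite[Proposition 3.8]{degreehypersurfaces} (the single-space analogue of Lemma~\ref{StapUll}) bounds it below, and the two bounds are incompatible. Hence the residual cycle is $(d-c)x_t$ for a single point $x_t$, and $f(\mathcal{C}_t)\subset V^{d-c}_{x_t}$, the cone of lines meeting $X$ at $x_t$ with multiplicity at least $h:=d-c$.
\item The key step is a dimension count. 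For $x$ general in $X$, the lines with contact order $\ge h$ at $x$ are cut out in $\mathbb{P}^n$ of directions by equations of degrees $1,\dots,h-1$, so for very general $X$ the cone $V^h_x$ has dimension $n+2-h$. Now $x_t$ need not be general: it may run over a special locus $Y_i\subset X$ of points carrying more lines with high contact. I would estimate $\dim Y_i$ via an incidence correspondence in the space of hypersurfaces, where imposing contact order $h$ along a positive-dimensional family of lines costs on the order of $h(h+1)/2$ conditions. The requirements are that the curves $f(\mathcal{C}_t)$ cover $X$ (so $\dim Y_i+\dim V^h_{x}\cap X\ge n+1$ with $x\in Y_i$) and that $V^h_{x_t}$ contains a curve. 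Together these give a quadratic inequality of the shape $h^2+h\le 4n+2$. Solving it gives $h\le\lfloor(\sqrt{16n+9}-1)/2\rfloor$, that is, the lower bound.
\end{enumerate}

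\emph{Upper bound.} I would run the same count backwards. Choose $h=\lfloor(\sqrt{16n+1}-1)/2\rfloor$ and an $(n+1-\dim)$-dimensional family of points $x$ whose cones $V^h_x$ have dimension $\ge 2$. Inside each cone take a one-dimensional subcone, cut it with $X$, and obtain curves through $x$. Projection from $x$ has degree $d-h$ on these curves, and as $x$ varies they cover $X$. The non-emptiness and expected dimension of the loci where $V^h_x$ jumps must be checked by exhibiting one hypersurface where the conditions are independent, for instance a suitable Fermat-type or monomial example.

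\emph{Main obstacle.} I expect the main obstacle to be the precise dimension estimate for the loci $Y_i$ in step~3, that is, proving that the contact conditions are independent for very general $X$. I would attack it with the incidence variety $\{(X,x,\ell)\}$ and a fibre dimension count over $(x,\ell)$, where independence of the Taylor coefficients of $X$ along $\ell$ is elementary. The delicate part is controlling families of lines rather than single lines. Finally, the coincidence of the two bounds for $n\in\{4\alpha^2+3\alpha,\,4\alpha^2+5\alpha+1\}$ is a direct arithmetic check that $\sqrt{16n+1}$ and $\sqrt{16n+9}$ give the same floor exactly at these values.
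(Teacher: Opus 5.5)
The paper gives no proof of this statement; it quotes it from \cite{bastianelli2019gonality}. So your sketch has to stand on its own, and step 3 together with the upper-bound construction has a genuine gap: \emph{you are counting the wrong object}.

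For a general point $x\in X$, the directions $\Lambda^h_x\subset\mathbb{P}^n$ of lines with contact order $\ge h$ form a complete intersection of type $(1,2,\dots,h-1)$. Hence $\dim V^h_x=n-h+2\ge 2$ for every $h\le n$, and the values of $h$ in the statement satisfy $h\le n$ once $n\ge 2$. Consequences:
\begin{enumerate}
\item The requirement that $V^h_{x_t}$ contain a curve (or have dimension $\ge 2$) holds at every general point, so it cannot produce a quadratic inequality.
\item Your claim that contact order $h$ along a positive-dimensional family of lines costs about $\frac{h(h+1)}{2}$ conditions is false as stated: an $(n-h+1)$-dimensional family of such lines exists at a general point at no cost.
\item So $h^2+h\le 4n+2$ is read off from the answer, not derived.
\end{enumerate}

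The same flaw makes your upper bound prove too much. Nothing in it uses the specific value of $h$. Running it with $h=n$, $x$ general, and the cone over the curve $\Lambda^n_x$ would give $\covgon(X)\le d-n$, which contradicts the stated lower bound for every $n\ge 4$. The error is that projection from $x$ maps the residual curve onto the base curve $\Gamma\subset\Lambda^h_x$ of the cone with degree $d-h$. This bounds the gonality by $d-h$ only if $\Gamma$ is rational, and $\Lambda^n_x$ is a smooth complete intersection curve of positive genus for $n\ge 4$.

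The missing idea is exactly this rationality. In the lower bound, the lines $\ell_{(t,y)}$, $y\in\mathbb{P}^1$, trace a \emph{rational} curve in $\Lambda^h_{x_t}$. What must be bounded is therefore the family of pairs (vertex, rational curve in $\Lambda^h_x$). The numbers in the statement are those of the case where this curve is a line, i.e.\ plane sections $\Pi\cap X$ with an $h$-fold point at $x$:
\begin{enumerate}
\item pairs $(x,\Pi)$ with $x\in X$ and $\Pi\ni x$ a plane form a $(3n-2)$-dimensional family;
\item an $h$-fold point at $x$ costs $\frac{h(h+1)}{2}-1$ conditions;
\item an $(n-1)$-dimensional covering family then forces $h(h+1)\le 4n$. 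Once independence is proved, this is what yields the upper bound, since projection from the $h$-fold point has degree $d-h$;
\item $h(h+1)\le 4n+2$ is the same count with one dimension of slack.
\end{enumerate}
The hard part is showing that, on a very general $X$, rational curves of higher degree cannot do better. This includes degenerate ones for which the restriction maps $H^0(\mathscr{O}_{\mathbb{P}^n}(i))\to H^0(\mathscr{O}_{\mathbb{P}^1}(ie))$ are not surjective, so the conditions need not be independent. That is where the independence problem you flag really lives, and the proposal offers no mechanism for it.

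Your final arithmetic claim is also wrong, and it reveals a misprint in the statement as quoted. The two floors differ exactly when $h(h+1)=4n+2$ has an integer solution. The only residues with $h(h+1)\equiv 2\pmod 4$ are $h\equiv 1,2\pmod 4$; writing $h=4\alpha+1$ or $h=4\alpha+2$ gives exactly $n=4\alpha^2+3\alpha$ or $n=4\alpha^2+5\alpha+1$. For example:
\begin{enumerate}
\item $n=7$ gives $\lfloor\frac{\sqrt{113}-1}{2}\rfloor=4$ but $\lfloor\frac{\sqrt{121}-1}{2}\rfloor=5$;
\item $n=3$ gives $3$ for both.
\end{enumerate}
So the bounds coincide for $n$ \emph{outside} this set; these are the possible exceptions in \cite{bastianelli2019gonality}. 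The direct check you propose would refute the sentence as written rather than confirm it.

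A smaller point: in step 2, excluding the case where the residual curves $D_t$ cover $X$ requires $d-c<c$. You should justify this from the a priori bound $c\ge d-n$ together with $d\ge 2n+2$.
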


Combining this theorem with the previous statement we get the proof of the first main result of this work that is : 

\begin{thm}

Let $X \subset \mathbb{P}^{n+1}\times \mathbb{P}^{m+1}$ with $m,n \geq 2$ be a very general irreducible smooth hypersurface of bi-degree $(a,b)$, such that $a \geq 2n+m+1,b\geq 2m+n+1$. Then: 
\begin{center}
    ${ \minimum \Big\{a-\lfloor \frac{\sqrt{16n+9}-1}{2} \rfloor,b-\lfloor \frac{\sqrt{16m+9}-1}{2} \rfloor \Big\} \leq \covgon(X)\leq \minimum \Big\{a-\lfloor \frac{\sqrt{16n+1}-1}{2} \rfloor,b-\lfloor \frac{\sqrt{16m+1}-1}{2} \rfloor \Big\} }$.
\end{center}
Moreover, both sides of the inequality coincide whenever $m,n \in \{4\alpha^{2}+3\alpha, 4\alpha^{2}+5\alpha+1, \alpha \in \mathbb{N}\}$
\end{thm}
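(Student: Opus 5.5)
The proof splits into the two inequalities, and the upper bound is the easier one.

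\textbf{Upper bound.} Fix a very general $z\in\mathbb{P}^{m+1}$. The fibre $X_z=\pi_2^{-1}(z)$ is then a very general smooth hypersurface of degree $a\geq 2n+m+1\geq 2n+2$ in $\mathbb{P}^{n+1}$ (recall $m\geq 1$). This needs a short check: for $X$ very general, the fibres over very general $z$ are very general in the parameter space of degree-$a$ hypersurfaces. The restriction map on bi-homogeneous forms of bi-degree $(a,b)$ is surjective onto the degree-$a$ forms, so the countable union of bad loci pulls back to a countable union of proper closed subsets.

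Theorem \ref{Bastianelli} then gives a covering family of $\bigl(a-\lfloor \frac{\sqrt{16n+1}-1}{2}\rfloor\bigr)$-gonal curves on $X_z$. Its construction is uniform: cones of lines with high contact at a point. So I would run it in families as $z$ varies over an open set of $\mathbb{P}^{m+1}$, for instance by taking the relative version of the incidence construction, or by a Hilbert-scheme argument that spreads the covering family of a general fibre over $z$. This produces a family of curves of the same gonality covering $X$. The same argument with $\pi_1$ gives the bound with $b$, and together these give the upper bound for $\covgon(X)$.

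\textbf{Lower bound.} Let $c=\covgon(X)$ and take a covering family computing it. By the upper bound, $c\leq a-\lfloor\frac{\sqrt{16n+1}-1}{2}\rfloor$, and we need this to satisfy the hypotheses of Propositions \ref{FibreCorres} and \ref{LocalisationOfCurves}, namely $c\leq 2k+1$ and $c\leq k-3$ where $k=\min\{a-n-2,b-m-2\}$. If these fail, $c$ is already large and the lower bound should follow directly. Otherwise Proposition \ref{Estimate} puts the congruence $\mathcal{B}$ in one component, say $Gr(1,n+1)\times\mathbb{P}^{m+1}$. Proposition \ref{LocalisationOfCurves} then says each curve $f(\mathcal{C}_t)$ lies in $V^{a-c}_{x_t}$, a union of lines of the form $(\ell,z)$ with contact of order at least $a-c$ at $x_t$.

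The key step is that all these lines share the second coordinate $z$ of $x_t$. Hence $f(\mathcal{C}_t)\subset X_{z_t}$, where $z_t=\pi_2(x_t)$, so the curves lie in the fibres of $\pi_2$. Since the family covers $X$ and each curve lies in some fibre, a general fibre $X_z$ is covered by those curves $f(\mathcal{C}_t)$ with $z_t=z$. These form a covering family of $c$-gonal curves on $X_z$, a very general degree-$a$ hypersurface. Theorem \ref{Bastianelli} then gives $c\geq a-\lfloor\frac{\sqrt{16n+9}-1}{2}\rfloor$. In the other case $c\geq b-\lfloor\frac{\sqrt{16m+9}-1}{2}\rfloor$, so $c$ is at least the minimum. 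The equality statement follows because the two floors agree for the listed values of $n$ and $m$, as in Theorem \ref{Bastianelli}.

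\textbf{Main obstacle.} The hard step is the numerical compatibility: checking that $a-\lfloor\frac{\sqrt{16n+1}-1}{2}\rfloor\leq k-3$, or otherwise handling the regime where Proposition \ref{LocalisationOfCurves} does not apply, under only $a\geq 2n+m+1$. I would first try to use Proposition \ref{Estimate} together with the observation that the Bastianelli lower bound is much smaller than $a-n$ when $n$ is large. If that fails, I would work directly with the fibrewise Cayley--Bacharach argument in the style of Proposition \ref{Estimate}: show that the curves lie in fibres by showing that the rational curves $H_t\subset\mathcal{B}$ have constant $\mathbb{P}^{m+1}$-coordinate. This would use that $X$ contains no rational curves (Remark \ref{NoLine}) and Lemma \ref{StapUll} to rule out the projection of the curves to $\mathbb{P}^{m+1}$ being non-constant.
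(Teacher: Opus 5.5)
Your route is the paper's. The upper bound is built fibrewise from Theorem~\ref{Bastianelli}. For the lower bound, Proposition~\ref{LocalisationOfCurves} puts $f(\mathcal{C}_t)$ in $V^{a-c}_{x_t}$, whose lines all lie over $\pi_2(x_t)$, so the curves sit in fibres $X_z$ and Theorem~\ref{Bastianelli} applies to a very general fibre. Your check that such fibres are very general is a useful addition that the paper skips.

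The gap is exactly the step you flagged, and your proposed handling of it fails. The hypothesis $c\le k-3$ of Proposition~\ref{LocalisationOfCurves} is never satisfied: Lemma~\ref{CayleyLine}, used as in the proof of Proposition~\ref{FibreCorres}, forces $c\ge k+2$ for every covering family. So your ``otherwise'' branch is empty, and everything rests on the claim that when the hypotheses fail ``the lower bound should follow directly''. That claim is false. All you then know is $c\ge k+2=\min\{a-n,b-m\}$, which is also all Proposition~\ref{Estimate} gives. Since $\lfloor\frac{\sqrt{16n+9}-1}{2}\rfloor<n$ for $n\ge 4$, this is in general strictly weaker than the target. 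Your remark that the Bastianelli bound is much smaller than $a-n$ has the inequality the wrong way round: $a-\lfloor\frac{\sqrt{16n+9}-1}{2}\rfloor$ is at least $a-n$, and strictly larger once $n\ge 4$.

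The fix is to look at what the proof of Proposition~\ref{LocalisationOfCurves} actually uses. It needs only $c\le 2k+1$, to invoke Proposition~\ref{FibreCorres} and build the congruence $\mathcal{B}$, together with Proposition~\ref{Estimate}, Lemma~\ref{StapUll} and the ramification count. The bound $c\le k-3$ plays no role. So the condition to verify for $c=\covgon(X)$ is $c\le 2k+1$, and your upper bound supplies it. Since $m\ge 2$ gives $a\ge 2n+3$, we have $a-\lfloor\frac{\sqrt{16n+1}-1}{2}\rfloor\le a\le 2a-2n-3$, and likewise for $b$. Hence $\covgon(X)\le\min\{2a-2n-3,2b-2m-3\}=2k+1$.

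The paper's own proof applies Proposition~\ref{LocalisationOfCurves} without checking any hypothesis, so it tacitly relies on this same reading. With this check in place of yours, the rest of your argument coincides with the paper's.
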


\begin{proof}
As stated throughout this work, $X$ can alternatively be seen as a family of hypersurfaces of degree $a$ over $\mathbb{P}^{m+1}$ or as a family of hypersurfaces of degree $b$ over $\mathbb{P}^{n+1}$. Proposition \ref{LocalisationOfCurves} implies that $f(C_{t})$ is contained in $V_{x_{t}}^{a-c}$ for a certain $x_{t} \in X $ or in $ W^{b-c}_{x_{t}}$. Suppose, for instance that $f(C_{t}) \subset V_{x_{t}}^{a-c}$. More precisely, as a line that is in $V_{x_{t}}^{a-c}$ is in $Gr(1,n+1)\times \pi_{2}(x_{t})$, this shows that for a general $t\in T$ we have $f(\mathcal{C}_{t})\subset \pi^{-1}_{2}(\pi_{2}(x_{t}))$, i.e. the curve is on one of the hypersurfaces of the family. This means, that computing $\covgon(X)$ can be done fibrewise. Using Theorem \ref{Bastianelli}, this implies that $\covgon(X)\geq \minimum \Big\{a-\lfloor \frac{\sqrt{16n+9}-1}{2} \rfloor,b-\lfloor \frac{\sqrt{16m+9}-1}{2} \rfloor \Big\}$.
Moreover, the upper bound is clear as one can construct a family of curves on $X$ by constructing it fibrewise using the family of curves mentioned in Theorem \ref{Bastianelli}. This yields the desired upper bound. As in \ref{Bastianelli}, both side of the inquality coincide whenever Moreover, both sides of the inequality coincide whenever $m,n \in \{4\alpha^{2}+3\alpha, 4\alpha^{2}+5\alpha+1, \alpha \in \mathbb{N}\}$. 
\end{proof}
\begin{rmq}
We would like to make some comments on the assumption on the range of the bi-degree $(a,b)$ of the hypersurfaces $X$ of our theorem. Indeed, as already stated before, the assumptions $a\geq 2m+n+1, b\geq 2n+m+1$ are only required to get the lower bound on $\covgon(X)$; while the upper bound is achieved as soon as $a \geq 2n+2, b\geq 2m+2$. This was required to carry out the computations made during the proof of the Proposition \ref{LocalisationOfCurves}. Following \cite{yeong2022algebraic}, these numerical assumptions are actually the ones ensuring that $X$ is algebraically hyperbolic, otherwise, Yeong shows that $X$ contains a line. As the proof relied repeatedly on the fact that $X$ does not contain any line, one would need to change the method of the proof to possibly lower the bound on $a$ and $b$.
\end{rmq}

\section{Measure of association for hypersurfaces}
In this final section, we study joint measures of association for pairs of varieties. Lazarsfeld and Martin initiated this line of investigation in \cite{lazarsfeld2023measures}. Their motivation is to explore how birational complexity can be measured not only with respect to $\mathbb{P}^{n}$ but through correspondences between two given varieties. Just as there are various measures of rationality for individual varieties, one may also introduce different joint measures of association. In our case we are interested in the measure derived from the covering gonality that we will call the joint covering gonality. To introduce this invariant, let $X,Y \subset \mathbb{P}^{n+1} \times \mathbb{P}^{m+1}$ be smooth irreducible hypersurfaces of bi-degrees $(a_1,b_1)$ and $(a_2,b_2)$, respectively. Consider $Z$ as in the following diagram:

\begin{center}
\begin{tikzcd}
    & Z \subset X \times Y \arrow[dl, "p_{X}"'] \arrow[dr, "p_{Y}"] \\
    X & & Y
\end{tikzcd}
\end{center}
\vspace{0.5cm}

Here, $p_{X}$ and $p_{Y}$ are generically finite dominant morphisms so that $Z$ is a correspondence in $X\times Y$. A natural problem is to compute the minimal degree of irrationality of such a correspondence, and similarly, to estimate its minimal covering gonality. The larger these invariants are, the more birationally unrelated the varieties $X$ and $Y$ can be regarded. For the remainder of this section, we focus on the following notion of joint covering gonality :  
\begin{ddef}

The \emph{joint covering gonality} of the pair $(X,Y)$ is
\begin{center}

   $\operatorname{cov.gon}(X,Y) := \min_{Z \subset X\times Y} \operatorname{cov.gon}(Z)$
\end{center}
where the minimum is taken over all irreducible correspondences $Z \subset X\times Y$ with
generically finite dominant projections to both $X$ and $Y$.

\end{ddef}
Motivated by \cite[Theorem 3.4]{lazarsfeld2023measures}, we establish the following lower bound for the joint covering gonality: 

\begin{thm}
Let $X, Y \subset \mathbb{P}^{n+1} \times \mathbb{P}^{m+1}$ be very general smooth hypersurfaces 
of bi-degrees $(a_1,b_1)$ and $(a_2,b_2)$ respectively, with $a_{1},a_{2} \geq 2n+m+2$ and $b_{1},b_{2} \geq 2m+n+2$. Then : 
\[
\operatorname{cov.gon}(X,Y) \ge k_1 + k_2 - n - m
\]

where : 

$k_1 = \min\{a_1 - n - 3, b_1 - m - 3\}, \quad
k_2 = \min\{a_2 - n - 3, b_2 - m - 3\}$

\end{thm}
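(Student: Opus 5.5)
We follow the strategy of Lazarsfeld--Martin \cite[Theorem 3.4]{lazarsfeld2023measures}, replacing the hypersurface input by the product-of-projective-spaces input of Section 1. Fix a correspondence $Z\subset X\times Y$ with $p_X,p_Y$ generically finite and dominant, and a covering family $\eta:\mathcal{C}\to T$, $f:\mathcal{C}\to Z$ of $c$-gonal curves, with $c=\covgon(Z)$. We may assume $c<k_1+k_2-n-m$ and aim for a contradiction. After the base change of Section 1 we obtain $\phi:\mathcal{C}\to T\times\mathbb{P}^{1}$. For general $(t,y)$ we get a fibre $\{q_1,\dots,q_c\}$. By the argument of Proposition \ref{Cayley}, applied to $Z$ (or a desingularization), these fibres satisfy Cayley--Bacharach with respect to $|K_Z|$. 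Since $p_X$ and $p_Y$ are generically finite, $K_Z\supseteq p_X^*K_X+p_Y^*K_Y$ up to an effective ramification divisor. Hence the image points $\{(x_i,y_i)\}\subset X\times Y$ satisfy Cayley--Bacharach with respect to the linear system $|\mathrm{pr}_1^*K_X\otimes \mathrm{pr}_2^*K_Y|$. We discard the ramification divisor as in the proof of Proposition \ref{FibreCorres}, choosing it to avoid the general fibre.

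The key combinatorial step is a product version of Lemma \ref{CayleyLine}, as in \cite{lazarsfeld2023measures}. Suppose a set of $c$ points in $X\times Y$ satisfies Cayley--Bacharach for $L_1\boxtimes L_2$. Then either the projection to $X$ satisfies Cayley--Bacharach for $L_1$, or the projection to $Y$ satisfies it for $L_2$, after passing to a suitable sub-configuration. More precisely, we will show the following. Let $Z_X$ be the set of distinct first coordinates. Either $Z_X$ fails Cayley--Bacharach for $|K_X|$, and then we can find a divisor $D_1\in|K_X|$ containing all $x_i$ except a nonempty subset. Or it satisfies it. In the first case we restrict to the points whose first coordinate is missed and use $D_1\boxtimes D_2$ to force Cayley--Bacharach on their second coordinates with respect to $|K_Y|$. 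In this situation the number of points on one side is controlled. Now write $K_X=\mathscr{O}(a_1-n-2,b_1-m-2)$ and apply the Segre-embedding argument of Proposition \ref{FibreCorres}. The relevant system contains $\mathscr{O}(k_1+1,k_1+1)$ twisted by something effective. Lemma \ref{CayleyLine} then forces at least $k_1+3$ points on the $X$ side, or $k_2+3$ on the $Y$ side. Note that the $-3$ in the definition of the $k_i$ leaves one unit of room for this bookkeeping; this is why the degree hypotheses are shifted by one compared to Theorem \ref{mainthm}.

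Next we pass to curves. Suppose the points $\{x_i\}$ collapse, meaning that $p_X\circ f$ restricted to $\mathcal{C}_t$ maps the $c$ points to few points. Then the images $p_X(f(\mathcal{C}_t))$ or $p_Y(f(\mathcal{C}_t))$ are covering curves of $X$ or $Y$, and their gonality is at most $c$ divided by the degree of $\mathcal{C}_t\to$ image. We then combine this with Lemma \ref{StapUll}, applied with $E=X$ (so $\dim E=n+m$), which gives $\covgon(X)\ge k_1+3-n-m+\dots$, and similarly for $Y$. Following \cite{lazarsfeld2023measures}, the additive form $k_1+k_2-n-m$ should come from splitting $c=c_X+c_Y$. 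Here $c_X$ counts fibre points distinguished on the $X$-side and is bounded below through Lemma \ref{StapUll} on $X$. The number $c_Y$ is the analogous count on the $Y$-side, bounded using Cayley--Bacharach for $K_Y$, which gives at least $k_2+2$ points. Adding the two bounds gives the stated inequality.

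The main obstacle is this last splitting. One must make rigorous that the Cayley--Bacharach condition for the external tensor product forces a genuinely additive lower bound, rather than only $\max$ of the two. In Lazarsfeld--Martin this is done by a careful lemma on Cayley--Bacharach for $L_1\boxtimes L_2$, combined with the variational bound of Lemma \ref{StapUll} applied to subvarieties swept out by the image curves. I would first try to reprove that lemma verbatim with the $K_X, K_Y$ above, using Proposition \ref{FanoOfLine} to control where collinear configurations can lie. Only after that would I check that the numerics $a_i\ge 2n+m+2$, $b_i\ge 2m+n+2$ make every term nonnegative.
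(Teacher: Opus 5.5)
There is a genuine gap, located at the two steps that carry the content of the theorem.

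\textbf{The two-sided positivity of $K_Z$.} The inequality $K_Z\succeq p_X^*K_X+p_Y^*K_Y$ does not follow from generic finiteness. Generic finiteness only gives $K_Z=p_X^*K_X+R_X$ and $K_Z=p_Y^*K_Y+R_Y$ separately, and nothing forces $R_X\succeq p_Y^*K_Y$; for $X=Y$ and $Z$ the diagonal one has $K_Z=K_X$. Positivity of $K_Z$ coming from both factors at once is exactly where very generality must enter. The paper obtains it by the variational method. Proposition \ref{PositivityZ} (Lazarsfeld--Martin's Prop.~3.1 applied to the universal families) gives a generically surjective map $p_X^*\Omega^{p+i}_{\mathcal{X}}|_X\otimes p_Y^*\Omega^{q+j}_{\mathcal{Y}}|_Y\to K_Z$ with $i+j=n+m+1$. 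Global generation of $T_{\mathcal{X}}|_X(1,1)$ and $T_{\mathcal{Y}}|_Y(1,1)$ then turns this into $K_Z\succeq (k_1-n-m+i)H_X+(k_2-n-m+j)H_Y$ (Proposition \ref{Inequality}). The loss of $n+m$ in this inequality is the source of the $-n-m$ in the statement. It does not come from Lemma \ref{StapUll}, which for $E=X$ (of dimension $n+m+1$, not $n+m$) only gives $\covgon(X)\ge\min\{a_1-n,b_1-m\}$.

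\textbf{The additive step.} Additivity cannot come from Cayley--Bacharach for $L_1\boxtimes L_2$ on a single point configuration in $X\times Y$. If the points share their $X$-coordinate, the $L_1$ factor separates nothing. Already on $\mathbb{P}^1\times\mathbb{P}^1$, $d_2+2$ points on a fibre $\{x\}\times\mathbb{P}^1$ satisfy Cayley--Bacharach for $\mathcal{O}(d_1,d_2)$. This is why your dichotomy only yields a $\max$/$\min$-type bound and the splitting $c=c_X+c_Y$ stays unproved.

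The paper gets additivity on the curve instead. For a general member $C$ of the covering family, $K_C\succeq$ the pullback of $(k_1-n-m+i)H_X+(k_2-n-m+j)H_Y$. Let $D_1,D_2$ be the normalizations of the images of $C$ in $X$ and $Y$. Then $C$ maps birationally onto its image in $D_1\times D_2$, and the $D_i$ cover $X$, resp.\ $Y$, so they have gonality at least $\min\{a_i-n,b_i-m\}$. By \cite[Lemma 3.6]{lazarsfeld2023measures}, this bundle separates the points of a general fibre $F$ of any map $C\to\mathbb{P}^1$ of degree at most $k_1+k_2-n-m-1$. Then $F$ would impose independent conditions on $|K_C|$, contradicting $h^0(K_C-F)=g-\deg F+1$, which follows from Riemann--Roch. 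Your proposal has no substitute for either of these two ingredients.

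\textbf{A smaller error.} For $\mathcal{C}_t\to D$ of degree $e$ one has $\gonality(D)\le c\le e\cdot\gonality(D)$, not $\gonality(D)\le c/e$ as you assert.
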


Our argument adapts the general strategy of Lazarsfeld–Martin in \cite[Theorem 3.4]{lazarsfeld2023measures}. This approach builds on the classical variational method of \cite{ein1988subvarieties} and \cite{voisin1996conjecture}. The idea is to show that the positivity of the canonical bundles of $X$ and 
$Y$ induces positivity on the canonical bundle of 
$Z$, thereby imposing geometric constraints that enable to control $\covgon(Z)$. \\
To make the idea precise, let us introduce the universal families of hypersurfaces in $\mathbb{P}^{n+1} \times \mathbb{P}^{m+1}$ of bi-degree $(a_{1},b_{1})$ and $(a_{2},b_{2})$. We will denote them $\mu :  \mathcal{X} \longrightarrow N$ and $\upsilon : \mathcal{Y} \longrightarrow M $, with $\dim(N)=p$ and $\dim(M)=q$. We apply \cite[Prop. 3.1]{lazarsfeld2023measures} to these families, which yields the following key statement. 

\begin{prop}
\label{PositivityZ}
Given a correspondence $Z \subset X \times Y$, then for every pair of integers $i,j $ with $i+j=n+m+1$, there exists a generically surjective morphism : 
\[
    p_{X}^{*}(\Omega^{p+i}_{\mathcal{X}}\vert_{X})\otimes p_{Y}^{*}(\Omega^{q+j}_{\mathcal{Y}}\vert_{Y}) \longrightarrow \Omega^{n+m+1}_{Z}
\]
\end{prop}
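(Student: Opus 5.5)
The plan is to obtain the map by composing two natural maps, one from each factor, and then to deduce generic surjectivity from the fact that $Z$ dominates both $X$ and $Y$ in a generically finite way. We work on the total spaces of the universal families. Since $X$ and $Y$ are very general, after a base change we may spread the correspondence $Z$ out over a family. Concretely, we take an étale neighbourhood (or a finite cover) $\mathcal{P}\to N\times M$ around $([X],[Y])$ over which there is a family of correspondences $\mathcal{Z}\subset \mathcal{X}\times_{\mathcal{P}}\mathcal{Y}$ whose fibre over the chosen point is $Z$. This is the standard Hilbert scheme argument: very general pairs carry correspondences that deform with the pair. Then $\dim \mathcal{Z}=p+q+n+m+1$, and the two projections $P_X:\mathcal{Z}\to\mathcal{X}$ and $P_Y:\mathcal{Z}\to\mathcal{Y}$ are generically finite and dominant.

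\textbf{Step 1: a generically surjective top-form map on $\mathcal{Z}$.} Since $\mathcal{Z}\to\mathcal{X}\times\mathcal{Y}$ is generically finite onto its image, and that image has dimension $p+q+n+m+1$ inside a variety of dimension $p+q+2(n+m+1)$, the pullback $P_X^*\Omega_{\mathcal{X}}\oplus P_Y^*\Omega_{\mathcal{Y}}\to\Omega_{\mathcal{Z}}$ is generically surjective. Taking top exterior powers and decomposing $\wedge^{p+q+n+m+1}$ of the direct sum gives a generically surjective map
\[\bigoplus_{r+s=p+q+n+m+1}P_X^*\Omega^{r}_{\mathcal{X}}\otimes P_Y^*\Omega^{s}_{\mathcal{Y}}\longrightarrow K_{\mathcal{Z}}.\]
Only the terms with $r\le p+n+m+1$ and $s\le q+n+m+1$ can be nonzero, and these are exactly $r=p+i$, $s=q+j$ with $i+j=n+m+1$. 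This step is where I would invoke \cite[Prop.~3.1]{lazarsfeld2023measures}: its content is that each individual summand already surjects generically, not merely their sum. I would prove this by choosing local coordinates on $\mathcal{Z}$ at a general point, taken as $p$ coordinates from the base of $\mathcal{X}$ together with complementary ones, and then picking a nonvanishing wedge of the right shape for each $(i,j)$.

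\textbf{Step 2: restriction to the fibre.} We restrict to $Z=\mathcal{Z}_{([X],[Y])}$ using the conormal sequence. Its normal bundle is trivial of rank $p+q$, so $K_{\mathcal{Z}}|_Z\cong K_Z=\Omega^{n+m+1}_Z$ canonically, via the identification $K_{\mathcal{Z}}|_Z\cong K_Z\otimes\det N^*$. Generic surjectivity survives restriction because $Z$ is a general fibre. Composing with the inclusion of the $(p+i,q+j)$ summand yields the claimed morphism $p_X^*(\Omega^{p+i}_{\mathcal{X}}|_X)\otimes p_Y^*(\Omega^{q+j}_{\mathcal{Y}}|_Y)\to\Omega^{n+m+1}_Z$.

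The main obstacle is Step 1. There I must show that for each split $(i,j)$ individually the map is generically nonzero. This requires genuine transversality: $\mathcal{Z}$ must not be "tangent" in a way that kills the mixed terms. I would handle it as Lazarsfeld--Martin do, using that $\mathcal{X}$ and $\mathcal{Y}$ are universal families. The maps $\mathcal{X}\to N$ and $\mathcal{Y}\to M$ are smooth, and the base directions of $\mathcal{P}$ can be distributed freely between the two factors. That freedom lets me build a basis of $T^*_z\mathcal{Z}$ adapted to any prescribed split.
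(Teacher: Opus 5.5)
Your skeleton is the standard Ein--Voisin/Lazarsfeld--Martin argument: spread $Z$ out over an \'etale neighbourhood $\mathcal{P}\to N\times M$, take top-degree forms on $\mathcal{Z}$, and restrict to the fibre using its trivial normal bundle. At the decisive step you invoke \cite[Prop.~3.1]{lazarsfeld2023measures}, which is all the paper does. The problem is the self-contained justification you sketch for that step, namely the generic nonvanishing of each single $(p+i,q+j)$ summand, which does not follow from surjectivity of the full direct sum. That sketch puts the freedom in the wrong place. The base directions cannot be ``distributed freely'': $\Omega_{\mathcal{X}}$ carries only the $p$ base forms $\mu^*\Omega_N$, and $\Omega_{\mathcal{Y}}$ only the $q$ forms $\upsilon^*\Omega_M$. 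This rigidity is exactly what pins the exponents to $p+i$ and $q+j$. What can be split arbitrarily are the $n+m+1$ \emph{fibre} directions. That uses the input you announce in your first sentence but never use in Step~1: $p_X\colon Z\to X$ and $p_Y\colon Z\to Y$ are generically finite, hence (characteristic $0$) \'etale at a general $z$, so $T_zZ\cong T_xX\cong T_yY$. Relatedly, your claim that $P_X\colon\mathcal{Z}\to\mathcal{X}$ is generically finite is false. Since $\dim\mathcal{Z}=p+q+n+m+1>\dim\mathcal{X}$, the map $P_X$ has $q$-dimensional fibres; only $Z\to X$ (equivalently $\mathcal{Z}\to\mathcal{X}\times_N\mathcal{P}$) is generically finite.

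The correct mechanism is short. Fix nonzero top forms $\nu_N,\nu_M$ near the base point. Since $\mathcal{Z}\to\mathcal{P}$ is submersive at a general $z$ and $\mathcal{P}\to N\times M$ is \'etale, $\theta:=P_X^*\mu^*\nu_N\wedge P_Y^*\upsilon^*\nu_M$ is a nonzero decomposable $(p+q)$-form at $z$ whose null space is exactly $T_zZ$. Hence for $\alpha\in\Omega^i_{\mathcal{X},x}$ and $\beta\in\Omega^j_{\mathcal{Y},y}$, the element $(\mu^*\nu_N\wedge\alpha)\otimes(\upsilon^*\nu_M\wedge\beta)$ is sent, under $K_{\mathcal{Z}}|_Z\cong K_Z$, to a nonzero multiple of $p_X^*(\alpha|_X)\wedge p_Y^*(\beta|_Y)$ at $z$. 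In other words, the map restricted to the subbundle $p_X^*\Omega^i_X\otimes p_Y^*\Omega^j_Y$ is the wedge product. Here $\Omega^i_X$ is embedded in $\Omega^{p+i}_{\mathcal{X}}|_X$ via $\gamma\mapsto\mu^*\nu_N\wedge\tilde\gamma$, which is well defined since a lift $\tilde\gamma$ is ambiguous only modulo the ideal generated by $\mu^*\Omega_N$; the same holds for $Y$ with $\nu_M$. By $T_zZ\cong T_xX\cong T_yY$, this is the pairing $\wedge^iT_z^*Z\otimes\wedge^jT_z^*Z\to\wedge^{n+m+1}T_z^*Z$, which is surjective for every $i+j=n+m+1$. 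Universality of the families plays no role here; it matters only in the next proposition, through global generation of $T_{\mathcal{X}}|_X(1,1)$.
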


This observation suggests the following heuristic: if $X$ satisfies $(BVA)_{p}$ and $Y$ satisfies $(BVA)_{p'}$ for some $p,p' \in \mathbb{N}$, then one might expect that $Z$ satisfies $(BVA)_{p+p'}$. While this expectation does not hold in full generality, the next proposition establishes a weaker form of this phenomenon, which will suffice for proving the desired inequality.

\begin{prop}
\label{Inequality}
Consider $X,Y \subset \mathbb{P}^{n+1} \times \mathbb{P}^{m+1}$ two very general smooth hypersurfaces such that $X \in |\mathcal{O}(a_{1},b_{1})|$ and $Y \in |\mathcal{O}(a_{2},b_{2})|$. Let $Z$ be a correspondence between $X,Y$. Then, for any two integers $i,j \in \mathbb{N}$ such that $i+j=n+m+1$, we have the inequality:
\begin{equation*}
\begin{split}
K_{Z} \succeq & \min\{a_{1}-2n-m-3+i, b_{1}-2m-n-3+i\} H_{X} \\
& + \min\{a_{2}-2n-m-3+j, b_{2}-2m-n-3+j\} H_{Y}
\end{split}
\end{equation*}
where $H_{X}$ (resp. $H_{Y}$) is the pullback on $Z$ of $\mathcal{O}_{X}(1,1)$ (resp. $\mathcal{O}_{Y}(1,1)$) and for any two divisors $D,D'$, $D \succeq D'$ means that $D-D'$ is an effective divisor. 
\end{prop}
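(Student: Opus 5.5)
The plan is to follow the variational argument of Lazarsfeld--Martin \cite[Theorem 3.4]{lazarsfeld2023measures}. The main input, besides Proposition \ref{PositivityZ}, is a global generation statement for the bundles $\Omega^{p+i}_{\mathcal{X}}\vert_X$ twisted by a suitable line bundle, as in Ein--Voisin. Fix $i,j$ with $i+j=n+m+1$.

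The first step is to show that
\[
\Omega^{p+i}_{\mathcal{X}}\vert_{X}\otimes \mathcal{O}_X(-\alpha_i,-\beta_i)
\]
is generically globally generated, where
\[
\alpha_i=a_1-2n-m-3+i,\qquad \beta_i=b_1-2m-n-3+i.
\]
I will do this with the standard computation. There is an exact sequence
\[
0\to \mathcal{O}_{\mathcal{X}}(-X)\to\Omega_{\mathcal{X}}\vert_X\to \Omega_X\oplus(\text{trivial})\to 0 .
\]
Working on the universal incidence $\mathcal{X}\subset N\times\mathbb{P}^{n+1}\times\mathbb{P}^{m+1}$, the bundle $T_{\mathcal{X}}$ restricted to $X$ contains the vertical part $\mathcal{M}$, the kernel of evaluation $H^0(\mathcal{O}(a_1,b_1))\otimes\mathcal{O}\to\mathcal{O}(a_1,b_1)$. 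Since $\mathcal{O}(1,1)$-sections generate, $\mathcal{M}^\vee\otimes\mathcal{O}(-1,-1)$ is globally generated. This is the analogue of \cite[Prop.~3.8]{degreehypersurfaces} and of the argument behind Lemma \ref{StapUll} in \cite{stapleton2020degree}.

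Taking wedge powers gives the following. The bundle $\Omega^{p+i}_{\mathcal{X}}\vert_X$ is the determinant $K_{\mathcal{X}}\vert_X$ wedged with $\Lambda^{n+m+1-i}T_{\mathcal{X}}\vert_X$, where $\dim \mathcal{X}=p+n+m+1$. We have $K_{\mathcal{X}}\vert_X=K_X$ up to pullback from $N$, so $K_{\mathcal{X}}\vert_X=\mathcal{O}(a_1-n-2,b_1-m-2)$. Each of the $j=n+m+1-i$ wedged tangent directions is chosen from $\mathcal{M}$ and therefore costs at most a twist by $\mathcal{O}(1,1)$. Doing the bookkeeping the way Stapleton--Ullery do in a product, where one loses $n+1$ in the first factor and $m+1$ in the second to absorb the horizontal directions, I expect the twist
\[
\mathcal{O}(a_1-2n-m-3+i,\; b_1-2m-n-3+i).
\]
I should double-check these constants against the claimed formula. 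Since $\mathcal{O}(\alpha,\beta)$ contains $\min\{\alpha,\beta\}\cdot\mathcal{O}(1,1)$ plus an effective part, this yields that
\[
\Omega^{p+i}_{\mathcal{X}}\vert_{X}\otimes\mathcal{O}_X(-\min\{\alpha_i,\beta_i\}H_X)
\]
is generically globally generated. The same argument applied to $Y$ gives the analogous statement for $\Omega^{q+j}_{\mathcal{Y}}\vert_Y$ with $\min\{a_2-2n-m-3+j,\ b_2-2m-n-3+j\}$.

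The second step pulls back to $Z$. The tensor product of the two pullbacks, twisted by
\[
-\min\{\cdot\}H_X-\min\{\cdot\}H_Y,
\]
is generically globally generated on $Z$, because generic global generation is preserved under pullback by the generically finite maps $p_X,p_Y$ and under tensor products. Composing with the generically surjective morphism of Proposition \ref{PositivityZ} onto $\Omega^{n+m+1}_Z=K_Z$, the line bundle
\[
K_Z-\min\{\cdot\}H_X-\min\{\cdot\}H_Y
\]
receives a generically surjective map from a generically globally generated sheaf. So it has a nonzero section, which gives the stated $\succeq$. Here I use that $Z$ can be taken smooth after desingularization, since the inequality is birational in nature.

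The main obstacle is the first step: getting the exact twist constants in the product setting. This requires controlling the positivity of $\mathcal{M}^\vee$ in each factor separately, since $\mathcal{O}(1,0)$ and $\mathcal{O}(0,1)$ generate differently. It also requires tracking how many wedge factors can be placed vertically versus horizontally. I would first try to adapt the Koszul-type argument of \cite[Lemma 6.7]{stapleton2020degree} verbatim, keeping a bidegree count, and then take minima at the end.
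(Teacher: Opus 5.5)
Your skeleton is the paper's: write $\Omega^{p+i}_{\mathcal X}|_X\cong\bigwedge^{j}T_{\mathcal X}|_X\otimes K_X$ with $j=n+m+1-i$, make each tangent factor globally generated after a twist by $\mathcal O(1,1)$, pull back to $Z$, and compose with Proposition~\ref{PositivityZ}. Your second step is fine: pull back, take a nonzero section, and split $\mathcal O(\alpha,\beta)$ as $\min\{\alpha,\beta\}$ times $\mathcal O(1,1)$ plus an effective part.

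\textbf{The gap is in the first step.} This is the only substantive step, and its justification does not hold up.

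First, your exact sequence is off. $X$ is a fibre of $\mathcal X\to N$, of codimension $p$, so there is no divisor $\mathcal O_{\mathcal X}(-X)$. The relevant sequence is $0\to\mathcal O_X^{\oplus p}\to\Omega_{\mathcal X}|_X\to\Omega_X\to 0$, and this is what gives $K_{\mathcal X}|_X\cong K_X$.

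Second, the claim that $\mathcal M^\vee\otimes\mathcal O(-1,-1)$ is globally generated is false. Use the sequence $0\to\mathcal O(-a_1-1,-b_1-1)\to H^0(\mathcal O(a_1,b_1))^\vee\otimes\mathcal O(-1,-1)\to\mathcal M^\vee(-1,-1)\to 0$. Since $H^0(\mathcal O(-1,-1))$ and $H^1(\mathcal O(-a_1-1,-b_1-1))$ both vanish, $\mathcal M^\vee(-1,-1)$ has no nonzero global sections at all. This holds on the product, and also on $X$ by Kodaira vanishing. The correct dual statement, which is the one you need, is that $\mathcal M\otimes\mathcal O(1,1)$ is globally generated.

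Third, $T_{\mathcal X}|_X$ is not spanned by vertical directions. It is an extension of $T_{\mathbb P^{n+1}\times\mathbb P^{m+1}}|_X$ by the vertical part, which is (a quotient of) $\mathcal M|_X$. So ``each wedged direction is chosen from $\mathcal M$'' is wrong, and you must also generate the horizontal quotient. One way is to lift the vector fields of $\mathfrak{gl}_{n+2}\oplus\mathfrak{gl}_{m+2}$ via $A\mapsto(-A\cdot F,A)$. This gives a surjection $\mathcal M|_X\oplus(\mathfrak{gl}_{n+2}\oplus\mathfrak{gl}_{m+2})\otimes\mathcal O_X\to T_{\mathcal X}|_X$.

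Together these show that $T_{\mathcal X}|_X\otimes\mathcal O_X(1,1)$ is globally generated. That is exactly the input the paper imports from \cite[Lemma 3.20]{stapleton2017degree}. Once you have it, no factor-by-factor bookkeeping is needed, and the picture of losing $n+1$ in the first factor and $m+1$ in the second does not correspond to anything. The constants come out directly from a uniform twist:
\[
\Omega^{p+i}_{\mathcal X}|_X\cong{\textstyle\bigwedge^{j}}\bigl(T_{\mathcal X}|_X\otimes\mathcal O_X(1,1)\bigr)\otimes\mathcal O_X(a_1-n-2-j,\;b_1-m-2-j).
\]
With $j=n+m+1-i$, the exponents are $a_1-2n-m-3+i$ and $b_1-2m-n-3+i$, as claimed; the same computation applies to $Y$.

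So the obstacle you single out, separate positivity in each factor, is not where the difficulty lies. What is missing is a correct proof, or a citation, of global generation of the whole restricted tangent bundle twisted by $\mathcal O(1,1)$.
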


\begin{proof}
As defined above, let  $\mathcal{X}\longrightarrow M$, $\mathcal{Y} \longrightarrow N$ be the two universal families of hypersurfaces with $\dim(N)=p$ and $\dim(M)=q$. As stated in \cite[Lemma 3.20]{stapleton2017degree}, the restricted tangent bundles $T_{\mathcal{X}}\vert_{X}(1,1)$ and $T_{\mathcal{Y}}\vert_{Y}(1,1)$ are globally generated. Using that, we decompose the vector bundles appearing in the last proposition to derive the desired relation. Indeed, we have the following : 
\begin{align*}
\Omega_{\mathcal{X}}^{p+i}|_{X} &= \left( \bigwedge^{(n+m+1+p)-(p+i)} T_{\mathcal{X}}|_{X} \right) \otimes K_{X} \\
&= \left( \bigwedge^{(n+m+1)-i} T_{\mathcal{X}}|_{X} \right) \otimes \mathcal{O}_{X}(a_1-n-2, b_1-m-2) \\
&= \left( \bigwedge^{(n+m+1)-i} (T_{\mathcal{X}}|_{X} \otimes \mathcal{O}_{X}(1,1)) \right) \otimes \mathcal{O}_{X}(a_1+i-2n-m-3, b_1+i-2m-n-3)
\end{align*}

Moreover, we get by the same method:

\[
\Omega_{\mathcal{Y}}^{q+j}|_{Y} = \left( \bigwedge^{(n+m+1)-j} (T_{\mathcal{Y}}|_{Y} \otimes \mathcal{O}_{Y}(1,1)) \right) \otimes \mathcal{O}_{Y}(a_2+j-2n-m-3, b_2+j-2m-n-3)
\]

As $T_{\mathcal{Y}}|_{Y} \otimes \mathcal{O}_{Y}(1,1)$ and $T_{\mathcal{X}}|_{X} \otimes \mathcal{O}_{X}(1,1)$ are globally generated, Proposition \ref{PositivityZ} yields the desired inequality.
\end{proof}

With that, we can now finally prove the previously announced lower bound on $\covgon(X,Y)$

\begin{proof}
Consider a covering family of c-gonal curves denoted $ \pi: \mathcal{C} \longrightarrow \mathcal{T}$, $f: \mathcal{C} \longrightarrow Z$. Then, we have that $K_{\mathcal{C}} \succeq K_{Z}$ because of the adjunction formula. Moreover, we have that if $C$ is a general fiber of the family $\pi : \mathcal{C} \longrightarrow \mathcal{T}$, $K_{C}=K_{\mathcal{C}}\vert_{C}$. Fix the pullback of the hyperplane divisor $H_{X}$ (resp. $H_{Y}$) in $X$ (resp) $Y$, Proposition \ref{Inequality} yields : 
\begin{equation*}
\begin{aligned}
K_{C} &\succeq \min\{a_{1}-2n-m-3+i, b_{1}-2m-n-3+i\} H_{X} \\
& \quad + \min\{a_{2}-2n-m-3+j, b_{2}-2m-n-3+j\} H_{Y}
\end{aligned}
\end{equation*} 
\vspace{0.5cm}
Since, a priori, $C$ does not map birationally to $X$ or $Y$, there is also no reason to expect $H_X,H_Y$ to be very ample. Nevertheless, one can circumvent this difficulty by adapting the strategy adopted by  Lazarsfeld and Martin in \cite[Lemma 3.6]{lazarsfeld2023measures}. We briefly recall it.
To do so consider, denote $D_{1}$ (resp $D_{2}$) the normalisation of the image of the curve $C$ in $X$ (resp. $Y$). Then, $D_{1}$ (resp. $D_{2}$) map birationally into $X$ (resp. Y) and $C$ maps birationally its image in $D_{1} \times D_{2}$. One can remark that, $\gonality(D_{1}) \geq \min\{a_{1}-n,b_{1}-m\}$ and $\gonality(D_{2}) \geq \min \{a_{2}-n,b_{2}-m\}$ as $D_{1}$ lies on $X$ and $D_{2}$ lies on $Y$.  
Then, we can build line bundles that separates the points on $D_{1}$ and $D_{2}$, which behave well under the pullback on $C$:  
\begin{center}
    $L_{1}:=\mathcal{O}_{D{1}}(a_{1}-2n-m-3+i, b_{1}-2m-n-3+i)$, $L_{2}:=\mathcal{O}_{D_{2}}(a_{2}-2n-m-3+j, b_{2}-2m-n-3+j)$
\end{center} 
\vspace{0.3cm}

Then, consider $\phi : C \longrightarrow \mathbb{P}^{1}$ of degree $d\leq  k_{1}+k_{2}-n-m-1$, \cite[Lemma 3.6]{lazarsfeld2023measures} imposes that $L_{1}\boxtimes L_{2}$ separates the points in the fibre of $\phi$. This means that $K_{C}$ shares the same property yielding a contradiction. Thus the desired inequality follows. 
\end{proof}

\begin{rmq}
It is natural to ask whether sharper bounds hold. For hypersurfaces in projective space, even the joint covering gonality is not fully understood. For example, if $X_d, Y_{d'} \subset \mathbb{P}^{n+1}$ are hypersurfaces of degrees $d,d'$, one expects $\mathrm{\covgon}(X_d,Y_{d'})$ to grow multiplicatively with $d$ and $d'$. By analogy, in our setting one might conjecture $\operatorname{cov.gon}(X,Y) \sim \min\{a_{1},b_{1}\}\cdot \min\{a_{2},b_{2}\}$ when all degrees are large enough. 
\end{rmq}

\bigskip

\noindent
\textsc{Universit\'e de Lorraine, Institut \'Elie Cartan de Lorraine, Nancy, France}

\texttt{E-mail : raphael.hiault@univ-lorraine.fr}
\bibliographystyle{alpha}
\bibliography{main}
\end{document}